\documentclass[11pt,a4paper]{amsart}

\usepackage{amssymb} % For double arrows
\usepackage[all]{xy} % Paquete xy-pic (diagramas)
\usepackage{graphicx} % Insertar gr\'{a}ficos
\usepackage{eucal} % Letras caligr\'{a}ficas
\usepackage{hyperref} % hiperv\'{\i}nculos
\usepackage{color} % paquete color

%\usepackage[notref]{showkeys}

%%%%%%%%%%%%%%%%%%%%%%%%%%%%%%%%%%%%%%%%%%%%%%%%%%%%%%%%%%%%
%%%             M\'{a}rgenes                                   %
%%%%%%%%%%%%%%%%%%%%%%%%%%%%%%%%%%%%%%%%%%%%%%%%%%%%%%%%%%%%
%\textwidth= 16cm % Ancho del texto
%\textheight= 24,5cm % Alto del texto
%%\topmargin -1cm
%\hoffset=-2cm
%\voffset=-2,5cm

% SPECIAL FORMATTING ---------------------------------------------
\setlength{\oddsidemargin}{0in}
\setlength{\evensidemargin}{0in}
\setlength{\marginparsep}{0in}
\setlength{\marginparwidth}{0in}
\setlength{\hoffset}{0in}
\setlength{\textwidth}{6.3in}
\setlength{\topmargin}{0in}
\setlength{\headheight}{0.3in}
\setlength{\headsep}{0.4in}
\setlength{\voffset}{-0.4in}
\setlength{\textheight}{9.7in}

%%%%%%%%%%%%%%%%%%%%%%%%%%%%%%%%%%%%%%%%%%

\numberwithin{equation}{section}
%\renewcommand{\labelenumi}{(\roman{enumi})}
%\AtEndDocument{\newpage\tableofcontents}

%%%%%%%%%%%%%%%%%%%%%%%%%%%%%%%%%%%%%%%%%%%%%%%%%%%%%%%%%%%%%
%                   Teoremas                                %
%%%%%%%%%%%%%%%%%%%%%%%%%%%%%%%%%%%%%%%%%%%%%%%%%%%%%%%%%%%%%

\newtheorem{definition}{Definition}[section]
\newtheorem{lemma}[definition]{Lemma}
\newtheorem{theorem}[definition]{Theorem}
\newtheorem{prop}[definition]{Proposition}

\theoremstyle{remark}
\newtheorem{example}[definition]{Example}
\newtheorem{remark}[definition]{Remark}

%%%%%%%%%%%%%%%%%%%%%%%%%%%%%%%%%%%%%%%%%%%%%%%%%%%%%%%%%%%%%%
%                    Entornos  y comandos                    %
%%%%%%%%%%%%%%%%%%%%%%%%%%%%%%%%%%%%%%%%%%%%%%%%%%%%%%%%%%%%%%

%\newenvironment{remark}{\begin{remark}\upshape}{\hfill$\diamond$\end{remark}}% observaci\'{o}n
\newenvironment{pf}{\begin{proof}\upshape}{\hfill%$\square$
\end{proof}} % demostraciones.

% Estilo exposici\'{o}n
\newcommand{\R}{\mathbb{R}} %Numeros reales
 %Numeros naturales
 %Numeros enteros
 %Numeros complejos
 %Numeros racionales
\newcommand{\lcf}{\lbrack\! \lbrack} %Corchete de Lie izquierda
\newcommand{\rcf}{\rbrack\! \rbrack} %Corchete de Lie derecha.
\newcommand{\g}{\mathfrak{g}}

\newcommand{\X}{\mathfrak{X}}
\renewcommand{\L}{{\mathcal L}}
\renewcommand{\d}{\mathrm d}
\newcommand{\smalcirc}{\mbox{\,\tiny{$\circ $}\,}}  %small composition circle

%%%%%%%%%%%%%%%%%%%%%%%%%%%%%%%%%%%%%%%%%%%%%%%%%%%%%%55%%%%
%               Numeraci\'{o}n secciones                      %%
%%%%%%%%%%%%%%%%%%%%%%%%%%%%%%%%%%%%%%%%%%%%%%%%%%%%%%%%%%%%

%\renewcommand{\thesection}{\Roman{section}} % cambia la forma de numerar las secciones
%\renewcommand{\thesubsubsection}{\arabic{subsubsection}} % cambia la forma de numerar las subsubsecciones

%%%%%%%%%%%%%%%%%%%%%%%%%%%%%%%%%%%%%%%%%%%%%%%%%%%%%%%%%%%%
%               grupos de ecuaciones                      %%
%%%%%%%%%%%%%%%%%%%%%%%%%%%%%%%%%%%%%%%%%%%%%%%%%%%%%%%%%%%%
\def\beq{\begin{equation}}
\def\eeq{\end{equation}}
\def\bea{\begin{eqnarray}}
\def\eea{\end{eqnarray}}
\def\beann{\begin{eqnarray*}}
\def\eeann{\end{eqnarray*}}
\def\beasn{\begin{sneqnarray}}
\def\eeasn{\end{sneqnarray}}
\def\ben{\begin{enumerate}}
\def\een{\end{enumerate}}
\def\bit{\begin{itemize}}
\def\eit{\end{itemize}}

%%%%%%%%%%%%%%%%%%%%%%%%%%%%%%%%%%%%%%%%%%%%%%%%%%%%%%%%%%%%%
%                       definiciones                        %
%%%%%%%%%%%%%%%%%%%%%%%%%%%%%%%%%%%%%%%%%%%%%%%%%%%%%%%%%%%%%

\def\r{\ensuremath{\mathbb{R}}}% N\'{u}meros reales
\def\rk{{\mathbb R}^{k}}% Espacio euclideo de dimensi\'{o}n k

\def\tkqh{(T^1_k)^*Q}

\def\d{{\rm d}}

 % Espacio de funciones de clase infinito

\def\rk{{\mathbb R}^{k}}
\def\ga{\mathfrak{g}}
\def\g^*{\mathfrak{g}^*}

% Derivada de Lie

\def\qed{\ifvmode\Realemovelastskip\fi
{\unskip\nobreak\hfil\penalty50\hbox{}\nobreak\hfil \hbox{\vrule
height1.2ex width1.2ex}\parfillskip=0pt \finalhyphendemerits=0
\par\smallskip}}
%   (pag. 106 del TeXbook), amb \hbox i \smallskip

%%%%%%%%%%%%%%%%%%%%%%%%%%%%%%%%%%%%%%%%%%%%%%%%%%%%%%%%%%%%%%%
%                   Tabla de direcciones                    %%%
%%%%%%%%%%%%%%%%%%%%%%%%%%%%%%%%%%%%%%%%%%%%%%%%%%%%%%%%%%%%%%%

\def\qed{\ifvmode\removelastskip\fi
{\unskip\nobreak\hfil\penalty50\hbox{}\nobreak\hfil \hbox{\vrule
height1.2ex width1.2ex}\parfillskip=0pt \finalhyphendemerits=0
\par\smallskip}}

%%%%%%%%%%%%%%%%%%%%%%%%%%%%%%%%%%%%%%%%%%%%%%%%%%%%%%%%%%%%%%%
%%%%                CONTADOR DE EP\'{I}GRAFES                  %%%%
%%%%%%%%%%%%%%%%%%%%%%%%%%%%%%%%%%%%%%%%%%%%%%%%%%%%%%%%%%%%%%%

%\newcounter{epigrafe}[subsection]
%\renewcommand{\theepigrafe}{\Alph{epigrafe}}

%\newcommand{\epigrafe}[1]{\stepcounter{epigrafe}\medskip\noindent\framebox{
%\normalsize\textsf{\theepigrafe. #1}}\bigskip}

%\newcounter{subepigrafe}[epigrafe]
%\renewcommand{\thesubepigrafe}{\Roman{subepigrafe}}

%\newcommand{\subepigrafe}[1]{\stepcounter{subepigrafe}\bigskip\noindent\underline{
%\normalsize\textsl{\bf\thesubepigrafe. #1}}\bigskip}

%%%%%%%%%%%%%%%%%%%%%%%%%%%%%%%%%%%%%%%%%%%%%%%%%%%%%%%%%%%%%%%%%%
%%%%            cambiar entornos enumerate                    %%%%
%%%%%%%%%%%%%%%%%%%%%%%%%%%%%%%%%%%%%%%%%%%%%%%%%%%%%%%%%%%%%%%%%%
%\renewcommand{\labelenumi}{{\bf{(\arabic{enumi})\,}}}
%\renewcommand{\labelenumii}{\arabic{enumii}$)$ }
%\renewcommand{\labelenumiii}{\alph{enumiii}$)$ }
%\renewcommand{\labelenumiv}{\fnsymbol{enumiv} }

%%%%%%%%%%%%%%%%%%%%%%%%%%%%%%%%%%%%%%%%%%%%%%%%%

%\pagestyle{myheadings}

\parskip=8pt

\title{Poly-Poisson structures}
\author[D. Iglesias Ponte]{David Iglesias}
\address{David Iglesias:
    Departamento de Matem\'{a}tica Fundamental,
    Facultad de Matem\'{a}ticas, Universidad de la Laguna, Spain}
    \email{diglesia@ull.es}

\author[J.C. Marrero]{Juan Carlos Marrero}
\address{Juan Carlos Marrero:
    Departamento de Matem\'{a}tica Fundamental,
    Facultad de Mate\-m\'{a}ticas, Universidad de la Laguna, Spain}
    \email{jcmarrer@ull.es}

\author[M. Vaquero]{Miguel Vaquero}
\address{Miguel Vaquero:
    Instituto de Ciencias Matem\'aticas (CSIC-UAM-
UC3M-UCM), Spain}
    \email{miguel.vaquero@icmat.es}

\keywords{polysymplectic manifolds, poly-Poisson structures,
  polysymplectic foliation, reduction, Lie algebroids, linear Poisson structures}

 \subjclass[2010]{53D05, 53D17, 70645}

\begin{document}

\begin{abstract}
In this paper we introduce poly-Poisson structures as a higher-order extension of Poisson structures. It
is shown that any poly-Poisson structure is endowed with a polysymplectic foliation.
It is also proved that if a Lie group acts polysymplectically on a
polysymplectic manifold then, under certain regularity conditions, the
reduced space is a poly-Poisson manifold. In addition, some
interesting examples of poly-Poisson manifolds are discussed.
\end{abstract}
\maketitle

\tableofcontents

%%%%%%%%%%%%%%%%%%%%%%%%%%%%%%%%%%%%%%%%%%%%%%%%%%%%%%%%%%%%%%%%%%%%%%%%%%%%%%%%%%%%%%
%%%%%%%%%%%  Section: Introduction             %%%%%%%%%%%%%%%%%%%%%%%%%%%%%%%%%%%%%%%
%%%%%%%%%%%%%%%%%%%%%%%%%%%%%%%%%%%%%%%%%%%%%%%%%%%%%%%%%%%%%%%%%%%%%%%%%%%%%%%%%%%%%%

\section{Introduction}
As it is well known, symplectic manifolds play a fundamental role
in the Hamiltonian formulation of classical mechanics. Indeed, 
given a configuration space $Q$, its cotangent bundle $T^*Q$, 
the phase space, is endowed with a symplectic structure and the
Hamilton's equations associated to a Hamiltonian $H\colon T^*Q
\to \r$ can be interpreted as the flow of the Hamiltonian
vector field associated with $H$.

Under the existence of symmetries, it is possible
to perform a reduction procedure in which some of the variables
are reduced.
The interest of the reduction procedure is twofold: not only we 
are able to reduce the dynamics but it is a way to generate new
examples of symplectic manifolds. One of the reduction methods
is the Marsden-Weinstein-Meyer reduction procedure \cite{MW-1974}: Given 
a Hamiltonian action 
of a Lie group $G$ on a symplectic
manifold $(M,\Omega )$ with equivariant moment map $J\colon M\to \g^*$, 
it is possible to obtain a symplectic structure on the quotient
manifold $J^{-1}(\mu )/G_{\mu}$. An interesting example is the action
of a Lie group $G$ on its cotangent bundle $T^*G$ by cotangent lifts
of left translations and the moment map $J\colon T^*G\to \g^*$ is
given by $J(\alpha _g)= (T_eR_g)^*(\alpha _g)$. Here, the reduced space 
$J^{-1}(\mu )/G_{\mu}$ is just the coadjoint orbit along the element
$\mu\in \g^*$ endowed with the Kirillov-Kostant-Souriau symplectic
structure.

On the other hand, the existence of a Lie group of symmetries for a symplectic 
manifold is one of the justifications for the introduction of Poisson 
manifolds, which generalize symplectic manifolds. Indeed if $(M,\Omega )$ 
is a symplectic manifold and $G$ is a Lie group acting freely and properly on $M$ and 
preserving the symplectic structure then the quotient manifold $M/G$ 
is endowed with a Poisson structure in such a way that 
the projection $\pi \colon M\to M/G$ is a Poisson epimorphism. 
A particular example is the so-called Poisson-Sternberg structure
on $T^*Q/G$, which is obtained reducing the symplectic structure on $T^*Q$
after lifting a free and proper action of a Lie group $G$ on $Q$ to $T^*Q$ 
(see \cite{MoMaRa} for applications in Mechanics). Any Poisson
structure induces a foliation with symplectic leaves, the symplectic
foliation. For symplectic manifolds there is only one leaf: the manifold
itself. The other interesting example is the Lie Poisson structure on $\g^*$, 
the dual space of any Lie algebra $\ga$. In this case, the symplectic leaves
are the coadjoint orbits previously obtained as reduced symplectic
spaces.

In \cite{Gunther-1987}, polysymplectic manifolds were introduced as a natural 
vector-valued extension of symplectic manifolds. A
polysymplectic structure on a manifold $M$ is a closed 
nondegenerate $\rk$-valued 2-form on $M$. The physical motivation
was obtain a geometric framework in which develop field theories
for Lagrangians defined on the first-jet bundle of a trivial
fibration $p \colon M=N\times U\to U$, $U\subseteq \rk$ being
the parameter space of the theory. In this case, the natural polysymplectic
manifold is $LQ$, the tangent frame bundle of $Q$. More generally,
it is possible to consider the cotangent bundle of $k$-covelocities 
$(T^1_k)^*Q:=T^{*}Q \oplus \stackrel{(k}{\dots}\oplus T^{*}Q$. 

In addition, in \cite{Gunther-1987} the author considers 
polysymplectic actions of Lie groups on polysymplectic 
manifolds and the notion of a polysymplectic moment map $J\colon M\to
\mathfrak{g}^*\otimes \mathbb{R}^k$ is introduced. We note that 
other moment map theories with different target spaces (such 
as Lie groups or homogeneous spaces \cite{AK,AKM,AMM}) have
been proposed but this one fits in a different framework.
In addition, in this situation, it is developed a reduction procedure
mimicking the symplectic case and, under certain conditions, a polysymplectic structure on
$J^{-1}(\mu)/G_{\mu}$ is obtained. We must remark here that, the
assumed conditions in \cite{Gunther-1987} do not assure that the
quotient space $J^{-1}(\mu)/G_{\mu}$ is polysymplectic because this
fact is based on a wrong result (see Lemma 7.5 in
\cite{Gunther-1987}). In \cite{MSV}, a right version of the
Marsden-Weinstein reduction theorem in the polysymplectic setting is proved.

As an example of the reduction procedure, a polysymplectic
structure on $k$-coadjoint orbits $\mathcal{O}_{(\mu_1,\ldots, \mu_k)}$, 
for $(\mu_1,\ldots, \mu_k)\in \g^*\otimes\rk=\g^*\times\stackrel{(k}{\ldots}
\times\g^*$ is obtained. 
A natural question arises: Is there a geometric structure on 
$\g^*\times\stackrel{(k}{\ldots}\times \g^*$ which admits a
polysymplectic foliation whose leaves are the $k$-coadjoint orbits?

In this paper, it is proposed an answer to this question introducing 
the notion of a poly-Poisson structure as a generalization of Poisson 
structures as well as of polysymplectic structures. Contrary to what
would be expected, these are not just $\rk$-valued 2-vector fields.
Indeed, if one starts from a polysymplectic structure $\bar{\omega}$,
one can construct a vector-bundle morphism $\bar{\omega}^{\flat}\colon TM \rightarrow (T^1_k)^*M$
and the non-degeneracy of $\bar{\omega}$ is equivalent to $\bar{\omega}^\flat$
being injective (see Proposition \ref{poly2}). However, if $k>2$ this does not imply
that $\bar{\omega}^\flat$ is an isomorphism, so it does not make sense
to take the inverse of this map. Our approach is the following one: if
$S:=\textrm{Im }(\bar{\omega}^{\flat})$ then one can consider the inverse
of $\bar{\omega}^{\flat}$ restricted to $S$, that is,
\[
\bar{\Lambda}^{\sharp}:=(\bar{\omega}^{\flat})^{-1}_{|S} \colon S\to TM.
\]
Thus, this fact suggests to introduce the notion of a poly-Poisson structure
as a pair $(S,\bar{\Lambda}^{\sharp})$, where $S$ is a vector subbundle of
$(T^1_k)^*M$ and $\bar{\Lambda}^{\sharp}\colon
S\rightarrow TM$ is a vector bundle morphism satisfying several conditions
(see Definition \ref{def:poly-poisson}).
As for Poisson manifolds, any poly-Poisson manifold
is endowed with a foliation, whose associated distribution
is $\bar{\Lambda}^\sharp (S)$, with polysymplectic leaves.
Moreover, a second motivation from Poisson geometry is
the following one. If $G$ is a Lie group which acts 
polysymplectically on a polysymplectic manifold then it is possible to obtain,
under some mild regularity conditions, a poly-Poisson structure
on the reduced space $M/G$. 

The paper is organized as follows. In section 2, it is recalled the notion of 
a polysymplectic structure and it is obtained a characterization in terms
of bundle maps. After giving some examples, in section 3 we introduce the
notion of a poly-Poisson structure. After that it is shown that any 
poly-Poisson manifold has associated a generalized foliation and that on each leaf it
is induced a polysymplectic structure. In order to obtain examples, 
it is proposed a method to construct poly-Poisson structures out of a family
of Poisson structures. As an application, given a linear Poisson structure
on a vector bundle $E^*$ with base space $Q$, it is described a poly-Poisson structure 
on the Whitney sum $E^*\oplus \ldots \oplus E^*$ and, in particular, on
the frame bundle $LE\to Q$. In section 5, it is developed a reduction
procedure for polysymplectic manifolds. More precisely, if $G$ is a Lie
group acting polysymplectically on a polysymplectic manifold $(M,\Omega )$ then, under certain
regularity conditions, the quotient manifold $M/G$ is endowed with a 
poly-Poisson structure. As an application, given a principal
$G$-bundle $Q\to Q/G$ with total space $Q$ it is possible to obtain a
poly-Poisson structure on the Withney sum $T^*Q/G \oplus
\stackrel{(k}{\dots}\oplus T^{*}Q/G$ as the reduction by the Lie group
$G$ of the cotangent bundle of $k$-covelocities $T^{*}Q \oplus
\stackrel{(k}{\dots}\oplus T^{*}Q$. As a consequence, we deduce that
the reduction of the tangent frame bundle $LQ\to Q$ is the frame
bundle of the Atiyah algebroid $TQ/G\to Q/G$. The paper ends with and
appendix which contains some basic definitions and results on Lie
algebroids and fiberwise linear Poisson structures on vector bundles.

We finally remark that there is a relation with Dirac structures (see \cite{Co}). 
Dirac structures provide a common framework to Poisson and presymplectic manifolds. These
are integrable Lagrangian subbundles of $TM\oplus T^*M$. To interpret poly-Poisson
structures in this setting, we could consider the bundle $TM\oplus (T^1_k)^*M$, which 
is endowed with a nondegenerate symmetric bilinear form with values in $\rk$ \cite{L}. We postpone to a 
future work the details of this, hoping it will provide a framework to analyze reduction (see,
for instance, \cite{BC, BC2}).

\textbf{Notation:} If $Q$ is a differentiable manifold of dimension $n$, $(T^1_k)^*Q$ will denote 
the cotangent bundle of $k$-covelocities of $Q$, that is, the Whitney sum of $k$-copies of the 
cotangent bundle,
$(T^1_k)^*Q:=T^{*}Q \oplus \stackrel{(k}{\dots}\oplus T^{*}Q$. For an element of $(T^1_k)^*Q$, the 
symbol $\ \bar{} \ $ will be used to denote it, and subscripts without $\ \bar{} \ $ to denote its 
components, i.e.
$\bar{\alpha}\in(T^1_k)^*Q$ and $\bar{\alpha}=(\alpha_1,\ldots,\alpha_k)$ where $\alpha_i\in T^*Q$ 
for $i\in\{1,\ldots,k\}$. The same criterion will be used for 2-forms and morphisms. If
$\bar{\alpha}\in(T^1_k)_{p}^*Q$ are $k$ 1-forms over the same point, $X\in T_pQ$ is a tangent vector and
$W\subset T_pQ$ is a vector subspace, we write $\bar{\alpha}(X)=(\alpha_1(X),\ldots,\alpha_k(X))\in\mathbb{R}^k$
for the evaluation and the restriction $\bar{\alpha}_{|W}=(\alpha_{1|W},\ldots,\alpha_{k|W})\in W^*\oplus \ldots
\oplus W^*$ is just the restriction of every component. If $\bar{\alpha}$ is a section of $(T^1_k)^*Q$ and $X$ a
vector field we will write $\mathcal{L}_X\bar{\alpha}=(\mathcal{L}_X\alpha_1,\ldots,\mathcal{L}_X\alpha_k)$ and
if $\bar{f}$ is a $\mathbb{R}^k$-valued function, $\bar{f}=(f_1,\ldots,f_k)$, then we will write $\d\bar{f}=(\d
f_1,\ldots,\d f_k)$. When we deal with the fibers of bundles over a manifold, we sometimes omit reference to
base point when it is clear or when what are we saying is true for all points of the base manifold.
%%%%%%%%%%%%%%%%%%%%%%%%%%%%%%%%%%%%%%%%%%%%%%%%%%%%%%%%%%%%%%%%%%%%%%%%%%%%%%%%%%%%%%
%%%%%%%%%%%  Section: k-polysymplectic structures     %%%%%%%%%%%%%%%%%%%%%%%%%%%%%%%
%%%%%%%%%%%%%%%%%%%%%%%%%%%%%%%%%%%%%%%%%%%%%%%%%%%%%%%%%%%%%%%%%%%%%%%%%%%%%%%%%%%%%%
\section{Polysymplectic structures}\label{poly_manifolds}
In this section, it is recalled the concept of a polysymplectic structure, which is one of the possible higher
order analogues of a symplectic structure (see \cite{Gunther-1987}).

\begin{definition}\label{poly1}
A {\rm $k$-polysymplectic structure} on a manifold $M$ is 
a closed nondegenerate $\rk$-valued $2$-form
\[
\bar{\omega}=\displaystyle\sum_{A=1}^k \omega^A\otimes e_A
\]
where $\{e_1,\ldots, e_k\}$ denotes the canonical basis of $\rk$.

Equivalently, $\bar{\omega}$ can be seen as a family of $k$ closed $2$-forms $(\omega^1,\ldots, \omega^k)$ such that
\begin{equation}\label{poly-cond}
\displaystyle\cap_{A=1}^k {\rm Ker\,}\omega^A=0\,.
\end{equation}
The pair $(M,\bar{\omega})=(M,\omega^1,\ldots, \omega^k)$ is called a
{\rm $k$-polysymplectic manifold} or simply a {\rm polysymplectic manifold}.
\end{definition}

\begin{remark}
It is clear that the notion of a 1-polysymplectic structure coincides with the definition of a symplectic structure.
\end{remark}

%Along this paper we shall use this characterization of a polysymplectic structure, thus,
%a family of $k$ closed $2$-forms $(\omega^1,\ldots, \omega^k)$ such that (\ref{poly-cond})
%holds is called a $k$-polysymplectic structure or simply polysymplectic structure.

Polysymplectic structures can be thought of as vector bundle morphisms, obtaining an equivalent definition of
polysymplectic manifolds.
\begin{prop}\label{poly2}
A {\rm $k$-polysymplectic structure} on a manifold $M$ is a vector bundle morphism
$\bar{\omega}^{\flat}\colon TM \rightarrow (T^1_k)^*M$ of the tangent bundle $TM$ to $M$ on the
cotangent bundle of $k$-covelocities of $M$ satisfying the conditions:
\begin{itemize}
\item[i)] Skew-symmetry:
\[
{\bar{\omega}}^{\flat}(X)(X)=(0,\ldots,0)  ,\qquad  X \in T_pM, \ p\in M.
\]
\item[ii)] Non degeneracy ($\bar{\omega}^{\flat}$ is a monomorphism):
\[
\textrm{Ker }(\bar{\omega}^{\flat})=\{0\}.
\]
\item[iii)] Integrability condition:
\begin{equation}\label{eq:integrability:polysymplectic}
{\overline{\omega}}^{\flat}([X,Y])={\mathcal{L}}_X{\overline{\omega}}^{\flat}(Y)-
{\mathcal{L}}_Y{\overline{\omega}}^{\flat}(X)+\d({\overline{\omega}}^{\flat}(X)(Y)),  \qquad X, Y \in \X(M).
\end{equation}
\end{itemize}
\end{prop}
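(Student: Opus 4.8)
The plan is to set up the tautological correspondence between $\rk$-valued $2$-forms and vector bundle morphisms $TM\to (T^1_k)^*M$ given by contraction, and then to check that the three conditions in the statement translate, respectively, into the skew-symmetry of each component, the nondegeneracy condition \eqref{poly-cond}, and the closedness of each component. Concretely, given $\bar\omega=(\omega^1,\dots,\omega^k)$ one sets
\[
\bar\omega^\flat(X):=(\iota_X\omega^1,\dots,\iota_X\omega^k),\qquad X\in T_pM,
\]
so that $\bar\omega^\flat(X)(Y)=(\omega^1(X,Y),\dots,\omega^k(X,Y))\in\rk$; conversely, a vector bundle morphism $\phi\colon TM\to (T^1_k)^*M$ is the same datum as a $k$-tuple of $(0,2)$-tensor fields, recovered by $\omega^A(X,Y):=[\phi(X)(Y)]_A$. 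I would first observe that these two assignments are mutually inverse, so that the whole proposition reduces to matching the conditions term by term, with no information lost in passing between the two descriptions.

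For the first two items the verification is immediate. Condition (i) reads $\omega^A(X,X)=0$ for every $A$, which (the ground field being $\r$) is equivalent to each $\omega^A$ being skew, that is, to $\bar\omega$ being a genuine $\rk$-valued $2$-form. For condition (ii), I would note that $X\in\mathrm{Ker}\,\bar\omega^\flat$ means $\iota_X\omega^A=0$ for all $A$, i.e. $X\in\bigcap_{A}\mathrm{Ker}\,\omega^A$; hence $\mathrm{Ker}\,\bar\omega^\flat=\bigcap_{A=1}^k\mathrm{Ker}\,\omega^A$ and (ii) is literally \eqref{poly-cond}.

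The only real work is the equivalence between the integrability condition (iii) and the closedness $\d\omega^A=0$, and this is where I expect the main (though routine) effort to lie. I would treat each component $\omega:=\omega^A$ separately and rewrite the right-hand side of \eqref{eq:integrability:polysymplectic} using standard Cartan calculus. From the commutation rule $\mathcal{L}_X\iota_Y-\iota_Y\mathcal{L}_X=\iota_{[X,Y]}$ one gets $\mathcal{L}_X\iota_Y\omega=\iota_Y\mathcal{L}_X\omega+\iota_{[X,Y]}\omega$, and Cartan's formula $\mathcal{L}_X=\d\,\iota_X+\iota_X\,\d$ (applied to $\omega$ and to the $1$-form $\iota_X\omega$), together with $\iota_Y\iota_X\omega=\omega(X,Y)$, yields
\[
\mathcal{L}_X\iota_Y\omega-\mathcal{L}_Y\iota_X\omega+\d(\omega(X,Y))
=\iota_{[X,Y]}\omega+\iota_Y\iota_X\,\d\omega .
\]
Comparing with the left-hand side $\iota_{[X,Y]}\omega$ of \eqref{eq:integrability:polysymplectic}, condition (iii) holds for all $X,Y\in\X(M)$ if and only if $\iota_Y\iota_X\,\d\omega=0$ for all $X,Y$, that is, if and only if $\d\omega=0$. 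Applying this to each $\omega^A$ shows that (iii) is equivalent to $\bar\omega$ being closed, and assembling the three equivalences finishes the proof. The subtle points to watch are the careful bookkeeping of the cancellations in the displayed identity and the remark, already used above, that a bundle morphism $TM\to(T^1_k)^*M$ is exactly a $k$-tuple of covariant $2$-tensors.
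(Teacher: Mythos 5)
Your proposal is correct and follows essentially the same route as the paper: both reduce the statement to the componentwise dictionary between bundle morphisms $TM\to(T^1_k)^*M$ and $k$-tuples of $2$-tensors, and both settle item (iii) via the identity $\iota_Y\iota_X\d\omega+\iota_{[X,Y]}\omega=\mathcal{L}_X\iota_Y\omega-\mathcal{L}_Y\iota_X\omega+\d(\omega(X,Y))$, concluding that integrability is equivalent to $\d\omega^A=0$ for each $A$. The only difference is that the paper simply quotes this identity as valid for any $2$-form, whereas you derive it from Cartan's formula and the commutator rule $\mathcal{L}_X\iota_Y-\iota_Y\mathcal{L}_X=\iota_{[X,Y]}$, which makes your write-up marginally more self-contained.
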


\begin{proof}
Let $\bar{\omega}^{\flat}\colon TM \rightarrow (T^1_k)^*M$ be a vector bundle morphism (over the identity of $M$).
Then, condition i) is equivalent to the existence of $k$ 2-forms $\omega^1, \ldots, \omega^k$ on $M$ such that
\[
\bar{\omega}^\flat (X)=((\omega^1)^\flat (X),\ldots ,(\omega^k)^\flat (X)), \qquad \textrm{ for } X\in TM.
\]
On the other hand, ii) holds if and only if
\[
\displaystyle\cap_{A=1}^k {\rm Ker\,}\omega^A=\{0\}.
\]
Finally, using the following relation
\[
i_Yi_X\d\Phi + \Phi^\flat [X,Y]=\L _X\Phi^\flat (Y)-\L _Y\Phi^\flat (X)+\d(\Phi^\flat (X)(Y)),\qquad \textrm{ for } X,Y\in \X(M),
\]
which is valid for any arbitrary 2-form $\Phi$ on $M$, we deduce that \eqref{eq:integrability:polysymplectic} holds if and only if $\omega ^A$ is a closed 2-form, for all $A$. This proves the result.
\end{proof}
Next, an interesting example of a polysymplectic manifold will be presented.
\begin{example}[The tangent frame bundle \cite{N}]\label{ex:frame:bundle}

Let $LM$ be the tangent frame bundle of $M$, that is, the elements of $LM$ are pairs $(m,\{e_i\})$, where $\{ e_i\}$, $i=1,\ldots n$, is a linear frame at $m\in M$. There is a right action of $GL(n)$, $n=\textrm{dim }M$,
on $LM$ by
\[
(m,e_i)\cdot g=(m,e_i g_{ij}), \qquad \textrm{ for } (m,e_i)\in LM, g=(g_{ij})\in GL(n).
\]
With this action $LM\to M$ is a principal $GL(n)$-bundle.

On $LM$ there exists a canonical vector-valued $1$-form $\vartheta%=\ds\sum_{A=1}^k\vartheta^A r_A
\in \Omega^1(LM,\r^n)$, called the \textit{soldering one-form},  which is defined by
\[
 \begin{array}{rccl}
 \vartheta(u)\colon & T_u(LM) & \to & \r^n\\\noalign{\medskip} & X & \mapsto & \vartheta(u)(X)=u^{-1}(T_u\pi (X))  ,
\end{array} %This form induces the symplectic form on $T^*M$
\]
where $\pi\colon LM\to M$ is the canonical projection and $u=(m,e_i)\in LM$ is seen as the non-singular linear map $u\colon \r^n\to T_mM$, $(v^1,\ldots ,v^n)\mapsto \sum_{i=1}^n v^ie_i$.

The soldering one-form endowes $LM$ with a $n$-polysymplectic structure given by
\[
  \omega =-d\vartheta .
\]
(for more details, see \cite{N}).\hfill$\diamond$

\end{example}

Next, it will be shown a method to construct a polysymplectic structure out of a family of symplectic manifolds.
Indeed, this construction is the model for a regular polysymplectic manifold.
\begin{example}\label{ex:regular:polysymplectic}
Let $(M_A,\Omega ^A)$ be a symplectic manifold, with $A\in\{1,\ldots ,k\}$, and $\pi_A\colon M\to M_A$
a fibration. Denote by $\omega ^A$ the presymplectic form on $M$ given by
\[
\omega ^A=\pi_A^*(\Omega ^A), \textrm{ for } A\in \{1,\ldots ,k\}.
\]
It is easy to prove that $\omega ^A$ has constant rank. In fact,
\[
\textrm{Ker }\omega^A=\textrm{Ker }(T\pi _A),
\]
where $T\pi _A\colon TM\to TM_A$ is the tangent map to $\pi _A\colon M\to M_A$. Thus, we deduce that
the family of 2-forms $(\omega ^1,\ldots ,\omega ^k)$ on $M$ define a $k$-polysymplectic structure
if \( \cap _{A=1}^k \textrm{Ker }(T\pi _A)=\{ 0\}. \)

Conversely, if $(\omega ^1,\ldots ,\omega ^k)$ is a $k$-polysymplectic structure on a manifold $M$ and the 2-forms $\omega^A$ have constant rank then the distributions ${\mathcal F}_A$, with $A\in\{1,\ldots ,k\}$,
given by
\[
x\in M \to {\mathcal F}_A (x)=\textrm{Ker }(\omega ^A(x))\subseteq T_xM
\]
are completely integrable (this follows using that $\omega ^A$ is closed). In addition, if the space of leaves
$M_A=M/{\mathcal F}_A$ of the foliation ${\mathcal F}_A$ is a quotient manifold, for $A\in\{1,\ldots ,k\}$,
and $\pi _A\colon M\to M_A=M/{\mathcal F}_A$ is the canonical projection, we have that the 2-form $\omega ^A$ is basic
with respect to $\pi _A$. Therefore, there exists a unique 2-form $\Omega ^A$ on $M_A$ satisfying
\[
\omega ^A=\pi _A^*(\Omega ^A).
\]
Finally, since the map $\pi _A^*\colon \Omega ^p(M_A)\to \Omega ^p(M )$ is a monomorphism
of modules, we deduce that $\Omega ^A$ is a symplectic form on $M_A$.\hfill$\diamond$
\end{example}

Some particular examples of the previous general construction are the following ones.
\begin{example}[The product of symplectic manifolds]\label{ex:product:symplectic}
Let $(M_A,\Omega ^A)$ be a symplectic manifold, with $A\in\{1,\ldots ,k\}$, and $M=M_1\times \ldots \times M_k$.
Then, it is clear that $M$ admits a $k$-polysymplectic structure $(\omega ^1,\ldots, \omega ^k)$. It is
sufficient to take
\[
\omega ^A=\textrm{pr}_A^*(\Omega ^A),\qquad \textrm{ for }A\in \{1,\ldots ,k\},
\]
where $\textrm{pr}_A\colon M\to M_A$ is the canonical projection from $M$ on $M_A$.
\end{example}

\begin{example}[The cotangent bundle of $k$-covelocities of a manifold]\label{canexample}
It is well known that, given an arbitrary manifold $Q$, its cotangent bundle $T^*Q$ is endowed with a canonical
symplectic structure in the following way. If $\theta$ is the Liouville 1-form on $T^*Q$ then $\omega=-\d\theta$
is a symplectic 2-form on $T^*Q$ (see, for instance, \cite{AM-1978,LM-1987}).

Next, using this symplectic structure and the previous general construction, %of Example \ref{ex:product:symplectic},
we will obtain a $k$-polysymplectic structure on the cotangent bundle of $k$-covelocities $(T^1_k)^*Q:=T^{*}Q
\oplus \stackrel{(k}{\ldots}\oplus T^{*}Q$ of $Q$. Denote by $\pi^{k}_Q\colon (T^1_k)^*Q\to Q$ the canonical
projection and by $\pi^{k,A}_Q\colon (T^1_k)^*Q\to T^*Q$ the fibration defined by
\[
\pi_Q^{k,A}(\alpha_1,\ldots,\alpha_k)=\alpha_A,\qquad \textrm{for }A\in\{1,\ldots k\}.
\]
If $(q^i)$ are local coordinates on $U\subseteq Q$, then the induced local coordinates $(q^i,p_i^A)$
on $(\pi ^k_Q)^{-1}(U)$ are given by
\[
q^i(\alpha^q_1,\ldots,\alpha^q_k)=q^i(q), \quad p_i^A(\alpha^q_1,\ldots,\alpha^q_k)=\alpha_A^q(\frac{\partial}{\partial q^i}_{|q}), \quad 1\leq i\leq n; 1\leq A\leq k.
\]
Moreover, the local expression of $\pi_Q^{k,A}$ is
\[
\pi_Q^{k,A}(q^i,p_i^1,\ldots ,p_i^k)=(q^i,p_i^A).
\]
Then, it is clear that
\[
\cap _{A=1}^k \textrm{Ker }(T\pi _Q^{k,A})=\{0\}.
\]
Therefore, $(\omega ^1,\ldots ,\omega ^k)$ is a $k$-polysymplectic structure on $(T^1_k)^*Q$,
where $\omega ^A$ is the 2-form on $(T^1_k)^*Q$ defined by
\[
\omega ^A=(\pi_Q^{k,A})^*\omega .
\]
Note that the local expression of $\omega ^A$ is
\begin{equation}\label{locexp}
\omega^A = \displaystyle  dq^i \wedge dp^A_i.
\end{equation}
\end{example}
\begin{remark}\label{rmk:relation}
Let $Q$ be a manifold of dimension $n$. Then, from Example \ref{ex:frame:bundle}, the tangent frame bundle $LQ$ is endowed with a polysymplectic structure $-\d\vartheta$. On the other hand, we have just seen that the cotangent bundle of $n$-covelocities, $(T^1_n)^*Q$, also is a polysymplectic manifold.

In addition, $LQ$ is an open subset of $(T^1_n)^*Q$, where the inclusion 
$\iota \colon
LQ\to (T^1_n)^*Q$ is given by $u\mapsto (\textrm{pr}_1\circ u^{-1}, \ldots ,\textrm{pr}_n\circ u^{-1})$. Note that, $\pi =\pi ^n_Q \circ \iota$, where $\pi\colon LQ\to Q$ and $\pi^n_Q\colon (T^1_n)^*Q\to Q$ are the projections. Let us see that the restriction of the poly-symplectic structure on $(T^1_n)^*Q$ is just $\d\vartheta$. More precisely, let us show that
\[
\iota ^* \left ( \displaystyle\sum_{A=1}^n (\pi _Q^{n,A}){}^* (\theta ) \otimes e_A \right )= \vartheta ,
\]
where $\{e_1,\ldots, e_n\}$ denotes the canonical basis of $\R ^n$, $\theta$ is the Liouville 1-form, $\pi
_Q^{n,A}\colon (T^1_n)^*Q\to T^*Q$ is the projection on the $A$-factor, for $A\in\{1,\ldots n\}$,  and $\vartheta$ is the soldering 1-form. If $(X_1,\ldots ,X_n)\in T
((T^1_n)^*Q)$ then
\[
\begin{array}{rcl}
\left ( \iota^* (\displaystyle\sum_{A=1}^k (\pi _Q^{n,A})^* (\theta ) \otimes e_A ) (u) \right ) (X_1,\ldots
,X_n)& = &\\[10pt] &\kern-400pt= &\kern-200pt \left ( \displaystyle\sum_{A=1}^k (\pi _Q^{n,A})^* (\theta ) \otimes e_A \right ) (\textrm{pr}_1\circ u^{-1},\ldots
,\textrm{pr}_n\circ u^{-1})(X_1,\ldots ,X_n)
\\[10pt] &\kern-400pt= &\kern-200pt(\textrm{pr}_1\circ u^{-1} (T\pi _Q
(X_1)),\ldots
,\textrm{pr}_n\circ u^{-1} (T\pi _Q (X_n)) ) = u^{-1} (T\pi (X_1,\ldots ,X_n)) \\[10pt]
&\kern-400pt =  &\kern-200pt \vartheta (u) (X_1,\ldots, X_n).
\end{array}
\]
\end{remark}

\begin{example}[$k$-coadjoint orbits \cite{Gunther-1987,MSV}]\label{ex:k-coadjoint orbit}
In this example, it is described a $k$-polysymplectic structure on every $k$-coadjoint orbit associated with a
Lie group $G$. In the particular case when $k=1$, it is recovered the standard symplectic structure on the
coadjoint orbits of $G$.

We will recall the definition of such a structure. Let $G$ be a Lie group and $\mathfrak{g}$ be its Lie algebra. Consider the \textit{coadjoint action}
 \[
 \begin{array}{lccl}
                    \textrm{Coad}\colon & G\times \mathfrak{g}^* &\to & \mathfrak{g}^*\\\noalign{\medskip}
                            & (g,\mu) & \mapsto & \textrm{Coad}(g,\mu)=\mu\circ Ad_{g^{-1}}.
\end{array}
\]
The orbit of $\mu \in\mathfrak{g}^*$ in $\mathfrak{g}^*$ under this action, denoted by
\[
                \mathcal{O}_\mu=\{\textrm{Coad}(g,\mu) \ \mid \  g\in G\}\,,
\]
is equipped with a symplectic structure $\omega_{\mu}$ defined by
\begin{equation}\label{symp-orbit}
\omega_{\mu}(\nu)\left(\xi_{\mathfrak{g}^*}(\nu),\eta_{\mathfrak{g}^*}(\nu)\right)=-\nu [\xi,\eta]
\end{equation}
where $\nu$ is an arbitrary point of $\mathcal{O}_{\mu}$, $\xi,\eta \in\mathfrak{g} $ and
$\xi_{\mathfrak{g}^*}(\nu),\eta_{\mathfrak{g}^*}(\nu)$ are the infinitesimal generators of the coadjoint action associated with $\xi$ and $\eta$ at the point $\nu$ (for more details see,
for instance, \cite[p. 303]{AM-1978}).

The extension to the polysymplectic setting is as follows. Define an action of $G$ over
$\mathfrak{g}^*\times\stackrel{(k}{\ldots}\times\mathfrak{g}^*$ by
\begin{equation}\label{coad^k}
\begin{array}{rcl}
\textrm{Coad}\,^k\colon  G\times \mathfrak{g}^*\times\stackrel{(k}{\ldots}\times\mathfrak{g}^*
                    &\to & \mathfrak{g}^*\times\stackrel{(k}{\ldots}\times\mathfrak{g}^*\\\noalign{\medskip}
                     (g,\mu_1,\ldots , \mu_k) & \mapsto & \textrm{Coad}\,^k(g,\mu_1,\ldots, \mu_k)=\left( \textrm{Coad}\,(g,\mu_1),
                                                                                          \ldots, \textrm{Coad}\,(g,\mu_k)\right)
\end{array}
\end{equation}
which is called the \textit{$k$-coadjoint action}.

Let $\mu=(\mu_1,\ldots, \mu_k)$ be an element of $\mathfrak{g}^*\times\stackrel{(k}{\ldots}\times\mathfrak{g}^*$. We will denote by $\mathcal{O}_\mu=\mathcal{O}_{(\mu_1,\ldots, \mu_k)}$
the $k$-coadjoint orbit at the point $\mu=(\mu_1,\ldots, \mu_k)$. If $\xi\in\mathfrak{g}$ then it is easy
to prove that the infinitesimal generator of $\textrm{Coad}\,^k$ associated with $\xi$ is given by
\begin{equation}\label{eq:inf:k:coadjoint}
\xi_{(\mathfrak{g}^*\times\stackrel{(k}{\ldots}\times\mathfrak{g}^*)}(\nu _1,\ldots ,\nu_k)=
(\xi_{\mathfrak{g}^*}(\nu _1),\ldots ,\xi_{\mathfrak{g}^*}(\nu _k)),\;\; \textrm{ for } (\nu_1,\ldots ,\nu _k)
\in \mathfrak{g}^*\times\stackrel{(k}{\ldots}\times\mathfrak{g}^*.
\end{equation}
On the other hand, the canonical projection
\[
\textrm{pr}_A\colon \mathfrak{g}^*\times\stackrel{(k}{\ldots}\times\mathfrak{g}^*\to \mathfrak{g}^*, \qquad (\nu _1,\ldots ,\nu _k)\mapsto \nu_A,
\]
induces a smooth map $\pi _A\colon \mathcal{O}_\mu=\mathcal{O}_{(\mu_1,\ldots, \mu_k)} \to \mathcal{O}_{\mu_A}$ between the $k$-coadjoint orbit $\mathcal{O}_\mu$ and the coadjoint
orbit $\mathcal{O}_{\mu_{A}}$. Moreover, using \eqref{eq:inf:k:coadjoint}, we deduce that $\pi_A$ is a surjective submersion and, in addition,
\[
\cap _{A=1}^k\textrm{Ker }(T\pi _A)=\{0\}.
\]
Now, we consider the family of 2-forms $(\omega_\mu^{1},\ldots, \omega_\mu^k)$ on $\mathcal{O}_\mu=\mathcal{O}_{(\mu_1,\ldots, \mu_k)}$ given by
\[
\omega_\mu^A = \pi_A^*(\omega_{\mu_A}), \qquad \textrm{ for every }A\in\{1,\ldots ,k\},
\]
where $\omega_{\mu_A}$ is the symplectic 2-form on $\mathcal{O}_{\mu_A}$. Then, $(\omega_\mu^{1},\ldots,
\omega_\mu^k)$ is a $k$-polysymplectic structure on $\mathcal{O}_\mu=\mathcal{O}_{(\mu_1,\ldots, \mu_k)}$  (for
more details, see \cite{Gunther-1987,MSV}).\hfill$\diamond$
\end{example}

%%%%%%%%%%%%%%%%%%%%%%%%%%%%%%%%%%%%%%%%%%%%%%%%%%%%%%%%%%%%%%%%%%%%%%%%%%%%%%%%%%%%%%
%%%%%%%%%%%  Section: k-poly-poisson structures      %%%%%%%%%%%%%%%%%%%%%%%%%%%%%%%%%
%%%%%%%%%%%%%%%%%%%%%%%%%%%%%%%%%%%%%%%%%%%%%%%%%%%%%%%%%%%%%%%%%%%%%%%%%%%%%%%%%%%%%%
\section{K-poly-Poissson structures}

In this section, we will generalize the notion a k-polysymplectic structure dropping
certain non-degeneracy assumption, as Poisson structures generalize symplectic structures.
More precisely,

\begin{definition}\label{def:poly-poisson}
A {\rm$k$-poly-Poisson structure} on a manifold $M$ is a couple $(S,\bar{\Lambda}^{\sharp})$, where $S$ is a
vector subbundle of $({T^1_k})^*M=T^*M\oplus\stackrel{(k}{\ldots}\oplus T^*M$ and $\bar{\Lambda}^{\sharp}\colon 
S\rightarrow TM$ is a vector bundle morphism which satisfies the following conditions:
\begin{itemize}
\item[i)] \label{anti} $\bar{\alpha} (\bar{\Lambda}^{\sharp}(\bar{\alpha}))=0$ for $\bar{\alpha}
\in S$.
\item[ii)]\label{nodege} If $\bar{\alpha}(\bar{\Lambda}^{\sharp}(\bar{\beta}))=0$ for
every $\bar{\beta}\in S$ then $\bar{\Lambda}^{\sharp}(\bar{\alpha})=0$.
\item[iii)] If $\bar{\alpha}, \bar{\beta}\in \Gamma(S)$ are sections of $S$ then we have that the following integrability condition holds
\begin{equation}\label{inte}
[\bar{\Lambda}^{\sharp}(\bar{\alpha}),\bar{\Lambda}^{\sharp}(\bar{\beta})]=
\bar{\Lambda}^{\sharp}\left({\mathcal{L}}_{\bar{\Lambda}^{\sharp}(\bar{\alpha})}
\bar{\beta}-{\mathcal{L}}_{\bar{\Lambda}^{\sharp}(\bar{\beta})}\bar{\alpha}-
\d(\bar{\beta}(\bar{\Lambda}^{\sharp}(\bar{\alpha})))\right).
\end{equation}
\end{itemize}
The triple $(M,S,\bar{\Lambda}^{\sharp})$ will be called {\rm a $k$-poly-Poisson manifold} or simply {\rm a poly-Poisson manifold}.

A $k$-poly-Poisson structure is said to be {\rm regular} if the vector bundle morphism
$\bar{\Lambda}^{\sharp}\colon S\subseteq (T^1_k)^*M\rightarrow TM$ has constant rank.
\end{definition}

\begin{lemma}\label{antisym}
Let $(S,\bar{\Lambda}^{\sharp})$ be $k$-poly-Poisson structure on $M$. Then,
\[
\bar{\alpha}(\bar{\Lambda}^{\sharp}(\bar{\beta}))=-\bar{\beta}(\bar{\Lambda}^{\sharp}(\bar{\alpha})),\qquad  \textrm{ for }\bar{\alpha}, \bar{\beta}\in S.
\]
\end{lemma}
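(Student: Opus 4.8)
The plan is to recognize this as the familiar polarization identity: a bilinear form whose associated quadratic form vanishes identically is automatically skew-symmetric. First I would introduce the fiberwise pairing $B(\bar{\alpha}, \bar{\beta}) := \bar{\alpha}(\bar{\Lambda}^{\sharp}(\bar{\beta})) \in \rk$, defined for $\bar{\alpha}, \bar{\beta}$ lying in the same fiber $S_p$. This pairing is bilinear over each point $p \in M$: it is linear in $\bar{\beta}$ because $\bar{\Lambda}^{\sharp}$ is a vector bundle morphism and the evaluation of covectors on a tangent vector is linear, and it is linear in $\bar{\alpha}$ because evaluation is linear in the covector slot as well. In this language, condition i) of Definition \ref{def:poly-poisson} asserts exactly that $B(\bar{\gamma}, \bar{\gamma}) = 0$ for every $\bar{\gamma} \in S$.

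The argument is then purely pointwise, so I would fix $p \in M$ and take $\bar{\alpha}, \bar{\beta} \in S_p$. Since $S$ is a vector subbundle of $(T^1_k)^*M$, the fiber $S_p$ is a linear subspace and hence $\bar{\alpha} + \bar{\beta} \in S_p$. Applying condition i) to the element $\bar{\alpha} + \bar{\beta}$ and expanding by bilinearity gives
\[
0 = B(\bar{\alpha}+\bar{\beta}, \bar{\alpha}+\bar{\beta}) = B(\bar{\alpha},\bar{\alpha}) + B(\bar{\alpha},\bar{\beta}) + B(\bar{\beta},\bar{\alpha}) + B(\bar{\beta},\bar{\beta}).
\]
The diagonal terms $B(\bar{\alpha},\bar{\alpha})$ and $B(\bar{\beta},\bar{\beta})$ vanish by condition i), leaving $B(\bar{\alpha},\bar{\beta}) + B(\bar{\beta},\bar{\alpha}) = 0$, which is precisely the claimed identity $\bar{\alpha}(\bar{\Lambda}^{\sharp}(\bar{\beta})) = -\bar{\beta}(\bar{\Lambda}^{\sharp}(\bar{\alpha}))$.

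There is essentially no serious obstacle here; the only points requiring a word of care are the two structural facts that make the polarization legitimate. First, one must use that $S$ is a genuine vector subbundle, so that $\bar{\alpha} + \bar{\beta}$ again belongs to $S$ and condition i) may be applied to it. Second, one must confirm the bilinearity of $B$, which rests on $\bar{\Lambda}^{\sharp}$ being a vector bundle morphism. Notably, neither the nondegeneracy condition ii) nor the integrability condition iii) plays any role, so the lemma holds for any pair $(S, \bar{\Lambda}^{\sharp})$ satisfying merely i).
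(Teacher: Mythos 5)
Your proof is correct and is essentially identical to the paper's: both apply condition i) of Definition \ref{def:poly-poisson} to $\bar{\alpha}+\bar{\beta}$, expand by bilinearity, and cancel the diagonal terms. The extra remarks you make (that $S$ being a subbundle guarantees $\bar{\alpha}+\bar{\beta}\in S$, and that conditions ii) and iii) are not needed) are accurate but do not change the argument.
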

\begin{pf}
By Definition \ref{def:poly-poisson} i) we have $0=(\bar{\alpha} + \bar{\beta}) (\bar{\Lambda}^{\sharp}(\bar{\alpha}+\bar{\beta}))=
\bar{\alpha}(\bar{\Lambda}^{\sharp}(\bar{\alpha}))+\bar{\alpha}(\bar{\Lambda}^{\sharp}(\bar{\beta})) +
\bar{\beta}(\bar{\Lambda}^{\sharp}(\bar{\alpha}))+ \bar{\beta}(\bar{\Lambda}^{\sharp}(\bar{\beta}))=
\bar{\alpha}(\bar{\Lambda}^{\sharp}(\bar{\beta})) + \bar{\beta}(\bar{\Lambda}^{\sharp}(\bar{\alpha}))$ and
this implies $\bar{\alpha}(\bar{\Lambda}^{\sharp}(\bar{\beta}))=-\bar{\beta}(\bar{\Lambda}^{\sharp}(\bar{\alpha}))$.
\end{pf}

As one would expect, polysymplectic manifolds are a particular example of $k$-poly-Poisson manifolds,
as it is shown in the following example.

\begin{example}[Polysymplectic manifolds] \label{ex:polysymplectic}
Let $(M,\bar{\omega})$ be a polysymplectic manifold. Since  $\bar{\omega}^{\flat}$ is a
monomorphism, $S:=\textrm{Im }(\bar{\omega}^{\flat})$ is a vector subbundle of $(T^1_k)^*M$.
Moreover, we consider
\[
\bar{\Lambda}^{\sharp}:=(\bar{\omega}^{\flat})^{-1}_{|S} \colon S\to TM.
\]
Clearly, i) in Definition \ref{def:poly-poisson} is equivalent to i) in Proposition
\ref{poly2}. On the other hand, if $\bar{\alpha}=\bar{\omega}^\flat(X)\in S$ satisfies
\[
\bar{\alpha}(\bar{\Lambda}^{\sharp}(\bar{\beta}))=0, \qquad \textrm{ for every }\bar{\beta}\in S,
\]
this implies that $\bar{\omega}^\flat(X)=0$. But, using that $\bar{\omega}^\flat$ is a monomorphism,
we deduce that $\bar{\Lambda}^{\sharp}(\bar{\alpha})=X=0$. Thus, ii) in Definition \ref{def:poly-poisson} holds.

Finally, the integrability condition \eqref{inte} is equivalent to
iii) in Proposition \ref{poly2}. Indeed, if $\bar{\alpha},\bar{\beta}\in \Gamma (S)$ then, using \eqref{eq:integrability:polysymplectic} and Lemma \ref{antisym}, it follows that
\[
\bar{\omega}^\flat [\bar{\Lambda}^{\sharp}(\bar{\alpha}),\bar{\Lambda}^{\sharp}(\bar{\beta})]=
{\mathcal{L}}_{\bar{\Lambda}^{\sharp}(\bar{\alpha})}
\bar{\beta}-{\mathcal{L}}_{\bar{\Lambda}^{\sharp}(\bar{\beta})}\bar{\alpha}-
\d(\bar{\beta}(\bar{\Lambda}^{\sharp}(\bar{\alpha}))).
\]
which implies that  \eqref{inte} holds.

It is not difficult to prove that polysymplectic structures are just poly-Poisson structures satisfying the non-degeneracy condition
\[
\textrm{Ker}\,(\bar{\Lambda}^{\sharp})=\{0\},
\]
which just says that $\bar{\Lambda}^{\sharp}$ is an isomorphism (note that, using Lemma \ref{antisym}, $\textrm{Ker}\,(\bar{\Lambda}^{\sharp})=(\textrm{Im }\bar{\Lambda}^\sharp)^\circ = \{
\bar{\alpha}\in (T^1_k)^*M\,|\, \bar{\alpha}(X)=0,\; \textrm{ for every } X\in \textrm{Im
}\bar{\Lambda}^\sharp \}$). \hfill$\diamond$

\end{example}

It is clear that Poisson manifolds are 1-poly-Poisson manifolds. Indeed, if $\Pi$ is a Poisson
structure on $M$ then $(S=T^*M,\Pi^\sharp)$ is poly-Poisson, where $\Pi^\sharp\colon T^*M\to TM$ is
the vector bundle morphism induced by the Poisson 2-vector $\Pi$. Moreover, it is well known that the generalized distribution
\[
p\in M\to \Pi^\sharp (T^*_pM)\subseteq T_pM
\]
is a symplectic generalized foliation (see, for instance, \cite{va}). Next, we will prove that a
$k$-poly-Poisson manifold admits a $k$-polysymplectic generalized foliation.

%\subsection{Canonical \textit{k}-polysymplectic foliation}
%Motivated by the existence of a symplectic foliation for any Poisson structure,
%in this section we are going to describe the generalized foliation associated with
%any $k$-poly-Poisson manifold and
%endow each leaf with a $k$-polysymplectic structure.
\begin{theorem} \label{thm:foliation}
Let $(S,\bar{\Lambda}^{\sharp})$ be a $k$-poly-Poisson structure on a manifold $M$. Then, $M$ admits a
$k$-polysymplectic generalized foliation $\mathcal{F}$ whose characteristic space at the point $p\in M$ is
\[
\mathcal{F}(p)=\bar{\Lambda}^{\sharp}(S_p)\subseteq T_pM.
\]
$\mathcal{F}$ is said to be the {\rm canonical $k$-polysymplectic foliation} of $M$.
\end{theorem}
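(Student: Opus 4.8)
The plan is to imitate the construction of the symplectic foliation of a Poisson manifold (cf.\ \cite{va}), with the Hamiltonian vector fields $\Pi^{\sharp}(\d f)$ replaced by the vector fields $\bar{\Lambda}^{\sharp}(\bar{\alpha})$, and then to equip each leaf with a polysymplectic structure by inverting $\bar{\Lambda}^{\sharp}$ exactly as in Example \ref{ex:polysymplectic}. For the existence of the foliation, I would consider the $C^{\infty}(M)$-module of vector fields
\[
\mathcal{C}=\{\bar{\Lambda}^{\sharp}(\bar{\alpha})\,:\,\bar{\alpha}\in\Gamma(S)\}\subseteq \X(M),
\]
which spans the characteristic distribution $\mathcal{F}(p)=\bar{\Lambda}^{\sharp}(S_p)$ at every point. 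The integrability condition iii) in Definition \ref{def:poly-poisson} asserts precisely that $[\bar{\Lambda}^{\sharp}(\bar{\alpha}),\bar{\Lambda}^{\sharp}(\bar{\beta})]$ lies again in the image of $\bar{\Lambda}^{\sharp}$, so $\mathcal{C}$ is involutive. Hence $\mathcal{F}$ is invariant under the local flows of the vector fields of $\mathcal{C}$, and the Stefan--Sussmann theorem yields that $\mathcal{F}$ is integrable; the leaf $L$ through $p$ then satisfies $T_pL=\mathcal{F}(p)=\bar{\Lambda}^{\sharp}(S_p)$.

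On a fixed leaf $L$ I would define a bundle map $\bar{\omega}_L^{\flat}\colon TL\to (T^1_k)^*L$ by
\[
\bar{\omega}_L^{\flat}\bigl(\bar{\Lambda}^{\sharp}(\bar{\alpha})\bigr)=\bar{\alpha}_{|TL},\qquad \bar{\alpha}\in S_p .
\]
This is well defined: if $\bar{\Lambda}^{\sharp}(\bar{\gamma})=0$ then, by Lemma \ref{antisym}, $\bar{\gamma}(\bar{\Lambda}^{\sharp}(\bar{\delta}))=-\bar{\delta}(\bar{\Lambda}^{\sharp}(\bar{\gamma}))=0$ for every $\bar{\delta}\in S_p$, so $\bar{\gamma}_{|TL}=0$ since $T_pL=\bar{\Lambda}^{\sharp}(S_p)$. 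Skew-symmetry of $\bar{\omega}_L^{\flat}$ is exactly condition i), and its injectivity (equivalently $\cap_A \textrm{Ker}\,\omega_L^A=\{0\}$) is exactly condition ii): if $\bar{\alpha}_{|TL}=0$ for some $\bar{\alpha}$ with $\bar{\Lambda}^{\sharp}(\bar{\alpha})=u$, then $\bar{\alpha}$ annihilates $\bar{\Lambda}^{\sharp}(S_p)$ and hence $u=\bar{\Lambda}^{\sharp}(\bar{\alpha})=0$. Thus $\bar{\omega}_L^{\flat}$ satisfies conditions i) and ii) of Proposition \ref{poly2}.

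It remains to verify the integrability condition \eqref{eq:integrability:polysymplectic} for $\bar{\omega}_L^{\flat}$, which by Proposition \ref{poly2} is equivalent to the closedness of the forms $\omega_L^A$. I would check it on vector fields $X=\bar{\Lambda}^{\sharp}(\bar{\alpha})$, $Y=\bar{\Lambda}^{\sharp}(\bar{\beta})$ with $\bar{\alpha},\bar{\beta}\in\Gamma(S)$, which are tangent to $L$ and span $TL$ at each point. Since such $X,Y$ are tangent to $L$, the restriction to $L$ commutes with $\mathcal{L}_X$, $\mathcal{L}_Y$ and $\d$, so the right-hand side of \eqref{eq:integrability:polysymplectic} equals the restriction to $TL$ of $\mathcal{L}_X\bar{\beta}-\mathcal{L}_Y\bar{\alpha}+\d(\bar{\alpha}(Y))$; rewriting $\bar{\alpha}(Y)=\bar{\alpha}(\bar{\Lambda}^{\sharp}(\bar{\beta}))=-\bar{\beta}(\bar{\Lambda}^{\sharp}(\bar{\alpha}))$ by Lemma \ref{antisym} and then applying the integrability condition \eqref{inte}, this restriction is exactly $\bar{\omega}_L^{\flat}([X,Y])$. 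Feeding this into the identity $i_Yi_X\d\Phi+\Phi^{\flat}[X,Y]=\mathcal{L}_X\Phi^{\flat}(Y)-\mathcal{L}_Y\Phi^{\flat}(X)+\d(\Phi^{\flat}(X)(Y))$ from the proof of Proposition \ref{poly2} (with $\Phi=\omega_L^A$) gives $i_Yi_X\d\omega_L^A=0$; as $X,Y$ span $TL$ pointwise, this forces $\d\omega_L^A=0$ on $L$, so each leaf carries a $k$-polysymplectic structure.

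The delicate point I expect is this last closedness step: one must handle carefully the interplay between restriction to the leaf and the operators $\mathcal{L}$ and $\d$, and justify that verifying \eqref{eq:integrability:polysymplectic} on the spanning family of vector fields $\bar{\Lambda}^{\sharp}(\bar{\alpha})$ suffices to deduce $\d\omega_L^A=0$. Smoothness of the leaf-wise forms and of the foliation is comparatively routine once one chooses, near each point, sections of $S$ whose images under $\bar{\Lambda}^{\sharp}$ frame $TL$.
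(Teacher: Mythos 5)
Your proposal is correct and follows essentially the same route as the paper: involutivity of the distribution $\bar{\Lambda}^{\sharp}(S)$ via \eqref{inte} together with local finite generation (coming from a local basis of $\Gamma(S)$) gives the generalized foliation, and the leaf-wise form $\bar{\omega}_L^{\flat}(\bar{\Lambda}^{\sharp}(\bar{\alpha}))=\iota^*\bar{\alpha}$ is shown to be well defined and injective using Lemma \ref{antisym} and conditions i), ii) of Definition \ref{def:poly-poisson}, exactly as in the paper. If anything, your final step is slightly more careful than the paper's: where the paper simply asserts that checking \eqref{eq:integrability:polysymplectic} on a generating family of vector fields suffices (not obvious, since that identity is not tensorial in $X$ and $Y$), you justify it by converting the identity into the pointwise (tensorial) statement $i_Yi_X\d\omega_L^A=0$, whence $\d\omega_L^A=0$ because the fields $\bar{\Lambda}^{\sharp}(\bar{\alpha})$ span $TL$.
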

\begin{pf}
Using that $S$ is a vector subbundle of $(T^1_k)^*M$, we can choose a local basis $\{\bar{\alpha}_1,\ldots, \bar{\alpha}_r\}$ of the space $\Gamma (S)$ of sections of $S$. Thus, it is clear that $\{ \bar{\Lambda}^{\sharp}(\bar{\alpha}_1),\ldots, \bar{\Lambda}^{\sharp}(\bar{\alpha}_r)\}$
is a local generator system of $\mathcal{F}$ which implies that $\mathcal{F}$ is locally finitely
generated. Integrability condition of
the poly-Poisson structure (Equation \eqref{inte}), makes this distribution involutive. Therefore,
$\mathcal{F}$ is a generalized foliation.

Next, let us prove that each leaf $\iota \colon L\to M$ of $\mathcal{F}$ is endowed with a $k$-polysymplectic structure $\bar{\omega}^\flat\colon TL\to (T^1_k)^*L$. Given $X\in T_pL
=\mathcal{F}(p)$ there exists $\bar{\alpha}\in S_p$ such that $\bar{\Lambda}^{\sharp}(\bar{\alpha})=X$. Then,
\begin{equation}\label{eq:induced:poly:symplectic}
\bar{\omega}^{\flat}(X)=\iota^*\bar{\alpha}.
\end{equation}
First, we will show that $\bar{\omega}^{\flat}$ is well defined. Assume that
$X=\bar{\Lambda}^{\sharp}(\bar{\alpha})=\bar{\Lambda}^{\sharp}(\bar{\beta})$.
Then $\bar{\Lambda}^{\sharp}(\bar{\alpha}-\bar{\beta})=0$ and, using Lemma \ref{antisym},
\begin{eqnarray*}
0&=& \bar{\gamma}(\bar{\Lambda}^{\sharp} (\bar{\alpha}-\bar{\beta}))\\
&=&-(\bar{\alpha}-\bar{\beta})(\bar{\Lambda}^{\sharp}(\bar{\gamma}))
\end{eqnarray*}
for any $\bar{\gamma} \in S$. As a consequence, $\iota^\ast(\bar{\alpha}-\bar{\beta})=0$.

It is easy to see that $\bar{\omega}^{\flat}$ is injective. Indeed, if
$X=\bar{\Lambda}^{\sharp}(\bar{\alpha})$ then $\bar{\omega}^{\flat}(X)=0$ is equivalent
to $\bar{\alpha}(\bar{\Lambda}^{\sharp}(\bar{\gamma}))=0$ for all $\bar{\gamma}\in S$.
But condition ii) of Definition \ref{def:poly-poisson}
implies that $X=\bar{\Lambda}^{\sharp}(\bar{\alpha})=0$.

To prove the integrability condition \eqref{eq:integrability:polysymplectic},
it is enough to prove the relation for a generating subset. Choose $\bar{\alpha}_1, \ldots, \bar{\alpha}_r$ a local
basis of $\Gamma(S)$ and denote $X_i=\bar{\Lambda}^{\sharp}(\bar{\alpha}_i)$,
$i=1,\ldots,r$, which generates the foliation $\mathcal{F}$.
Taking two elements $X_i=\bar{\Lambda}^{\sharp}(\bar{\alpha}_i)$, $X_j=\bar{\Lambda}^{\sharp}(\bar{\alpha}_j)$,
using \eqref{inte}, we have
\[
[X_i,X_j]=
\bar{\Lambda}^{\sharp}\left({\mathcal{L}}_{\bar{\Lambda}^{\sharp}(\bar{\alpha}_i)}\bar{\alpha}_j-
{\mathcal{L}}_{\bar{\Lambda}^{\sharp}(\bar{\alpha}_j)}\bar{\alpha}_i-
\d(\bar{\alpha}_j(\bar{\Lambda}^{\sharp}(\bar{\alpha}_i)))\right) .
\]
Applying $\bar{\omega}^\flat$ to both sides of the equation, from the fact that
$\bar{\omega}\circ\bar{\Lambda}^{\sharp}(\bar{\alpha})=\iota ^*\bar{\alpha}$, $\iota$ being
the inclusion from the leaf into the manifold $M$, this is just
\[
\bar{\omega}^{\flat}([X_i,X_j])=\iota ^*\left({\mathcal{L}}_{\bar{\Lambda}^{\sharp}
(\bar{\alpha}_i)}\bar{\alpha}_j-{\mathcal{L}}_{\bar{\Lambda}^{\sharp}(\bar{\alpha}_j)}
\bar{\alpha}_i-\d(\bar{\alpha}_j(\bar{\Lambda}^{\sharp}(\bar{\alpha}_i)))\right),
\]
and, from Equation \eqref{eq:induced:poly:symplectic} and the properties of Lie derivative and pull-back, we have
\[
\iota ^*\left({\mathcal{L}}_{\bar{\Lambda}^{\sharp}(\bar{\alpha}_i)}\bar{\alpha}_j-
{\mathcal{L}}_{\bar{\Lambda}^{\sharp}(\bar{\alpha}_j)}\bar{\alpha}_i-
\d(\bar{\alpha}_j(\bar{\Lambda}^{\sharp}(\bar{\alpha}_i)))\right)
={\mathcal{L}}_{X_i}\bar{\omega}^{\flat}(X_j)-{\mathcal{L}}_{X_j}\bar{\omega}^{\flat}(X_i)
-\d(\bar{\omega}^{\flat}(X_j)(X_i)),
\]
so the integrability condition for $\bar{\omega}$ holds.
\end{pf}

\begin{example}
Let $(M,\bar{\omega})$ be a connected polysymplectic manifold and consider the corresponding poly-Poisson manifold
$(M,S,\bar{\Lambda}^\sharp)$. Then, from Example \ref{ex:polysymplectic}, the distribution
$\bar{\Lambda}^{\sharp}(S)$ is just $TM$. Thus, there is only one leaf, the manifold $M$, and the
poly-symplectic structure on it is just $\bar{\omega}$.
\end{example}

Next, we will prove a converse of Theorem \ref{thm:foliation}.

Let $M$ be a smooth manifold, $S$ a vector subbundle of $(T^1_k)^*M$ and $\bar{\Lambda}^\sharp\colon S\to TM$ a
vector bundle morphism such that conditions i) and ii) in Definition \ref{def:poly-poisson} hold. If $p$ is a
point of $M$, we can define the nondegenerate $\rk$-valued 2-form $\bar{\omega}(p)\colon \bar{\Lambda}^\sharp
(S_p)\times \bar{\Lambda}^\sharp (S_p)\to \rk$ on the vector space $\bar{\Lambda}^\sharp (S_p)$ given by
\[
\bar{\omega}(p) ( \bar{\Lambda}^\sharp (\bar{\alpha}_p), \bar{\Lambda}^\sharp (\bar{\beta}_p))=\bar{\alpha}_p(\bar{\Lambda}^\sharp (\bar{\beta}_p)),\;\textrm{ for }\bar{\alpha}_p,\bar{\beta}_p\in S_p.
\]
In fact, since $\bar{\alpha} (\bar{\Lambda}^\sharp (\bar{\beta}))=-\bar{\beta} (\bar{\Lambda}^\sharp (\bar{\alpha}))$ for $\bar{\alpha},\bar{\beta}\in S$, we deduce 
that $\bar{\omega}(p)$ is well-defined.

Now, assume that the generalized distribution $\mathcal{F}$ on $M$ defined by
\[
p\in M\to \mathcal{F}(p)=\bar{\Lambda}^\sharp (S_p)\subseteq T_pM
\]
is involutive. Then, since $\mathcal{F}$ is locally finitely generated, we have that $\mathcal{F}$ is a generalized foliation.

In addition, if $L$ is a leaf of $\mathcal{F}$ then the restriction of $\bar{\omega}$ to $L$ induces a
nondegenerate $\rk$-valued 2-form $\bar{\omega}_L$ on $L$.  Therefore, if $\bar{\omega}_L$ is closed
we deduce that $\bar{\omega}_L$ defines a $k$-polysymplectic structure on $L$.

We will see that if this condition holds for every leaf $L$ then the couple $(S, \bar{\Lambda}^\sharp )$
is a $k$-poly-Poisson structure on $M$. Indeed, the result follows using Proposition \ref{poly2} (more
precisely Eq. \eqref{eq:integrability:polysymplectic}) and the fact that
\[
\bar{\omega}_L (\bar{\Lambda}^\sharp (\bar{\alpha})_{|L})=\iota^*\bar{\alpha},\qquad \textrm{ for } \bar{\alpha}\in \Gamma(S),
\]
where $\iota\colon L\to M$ is the canonical inclusion and we also denote by $\bar{\omega}_L$ the
vector bundle morphism between $TL$ and $(T^1_k)^*L$ induced by the $\rk$-valued 2-form
$\bar{\omega}_L$.

In conclusion, we have proved the following result.
\begin{theorem}\label{thm:inverse}
Let $S$ be a vector subbundle of $(T^1_k)^*M$ and $\bar{\Lambda}^\sharp \colon S\subseteq
{(T^1_k)^*M}\to TM$ a vector bundle morphism such that:
\begin{itemize}
\item[i)] $\bar{\alpha} (\bar{\Lambda}^{\sharp}(\bar{\alpha}))=0$, for $\bar{\alpha}
\in S$, and
\item[ii)] if $\bar{\alpha}(\bar{\Lambda}^{\sharp}(\bar{\beta}))=0$ for
every $\bar{\beta}\in S$ then $\bar{\Lambda}^{\sharp}(\bar{\alpha})=0$.
\end{itemize}
Assume also that the generalized distribution
\[
p\in M\to \mathcal{F}(p)=\bar{\Lambda}^\sharp (S_p)\subseteq T_pM
\]
is involutive and for every leaf $L$ the corresponding $\rk$-valued 2-form $\bar{\omega}_L$ on $L$ 
(induced by the morphism $\bar{\Lambda}^\sharp$) is closed. Then, the couple $(S,
\bar{\Lambda}^\sharp )$ is a $k$-poly-Poisson structure on $M$ and $\mathcal{F}$ is the canonical 
$k$-polysymplectic foliation of $M$.
\end{theorem}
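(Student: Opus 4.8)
The plan is to reconstruct, leaf by leaf, the polysymplectic data and then read off the integrability condition iii) of Definition \ref{def:poly-poisson} from its polysymplectic counterpart in Proposition \ref{poly2}. Conditions i) and ii) are hypotheses, so only \eqref{inte} remains to be checked. First I would record that $\mathcal{F}$, being involutive and (since $S$ is a subbundle) locally finitely generated, is a generalized foliation, and that on each leaf $\iota\colon L\to M$ the induced $\rk$-valued $2$-form $\bar{\omega}_L$ is nondegenerate (this is where i) and ii) enter) and closed (hypothesis); hence it is a $k$-polysymplectic structure and its associated bundle map $\bar{\omega}_L^{\flat}\colon TL\to(T^1_k)^*L$ satisfies the integrability identity \eqref{eq:integrability:polysymplectic}. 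The bridge between the two pictures is the relation
\[
\bar{\omega}_L^{\flat}\bigl(\bar{\Lambda}^{\sharp}(\bar{\alpha})_{|L}\bigr)=\iota^*\bar{\alpha},\qquad \bar{\alpha}\in\Gamma(S),
\]
which I would verify at the outset: evaluating the left-hand side on $\bar{\Lambda}^{\sharp}(\bar{\beta})$ gives $\bar{\omega}_L(\bar{\Lambda}^{\sharp}(\bar{\alpha}),\bar{\Lambda}^{\sharp}(\bar{\beta}))=\bar{\alpha}(\bar{\Lambda}^{\sharp}(\bar{\beta}))$ by definition of $\bar{\omega}_L$, which is exactly $(\iota^*\bar{\alpha})(\bar{\Lambda}^{\sharp}(\bar{\beta}))$ since $\bar{\Lambda}^{\sharp}(\bar{\beta})$ is tangent to $L$.

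To obtain \eqref{inte}, fix $\bar{\alpha},\bar{\beta}\in\Gamma(S)$ and set $X=\bar{\Lambda}^{\sharp}(\bar{\alpha})$, $Y=\bar{\Lambda}^{\sharp}(\bar{\beta})$. By involutivity $[X,Y]$ is tangent to $\mathcal{F}$, so on a leaf $L$ its restriction $[X_{|L},Y_{|L}]$ is a vector field on $L$ to which I can apply \eqref{eq:integrability:polysymplectic}. Substituting the bridge relation $\bar{\omega}_L^{\flat}(X_{|L})=\iota^*\bar{\alpha}$, $\bar{\omega}_L^{\flat}(Y_{|L})=\iota^*\bar{\beta}$, and using that $X_{|L},Y_{|L}$ are $\iota$-related to $X,Y$ — so that $\mathcal{L}$ and $\d$ commute with $\iota^*$ — the right-hand side of \eqref{eq:integrability:polysymplectic} becomes $\iota^*\bigl(\mathcal{L}_X\bar{\beta}-\mathcal{L}_Y\bar{\alpha}+\d(\bar{\alpha}(Y))\bigr)$; by Lemma \ref{antisym} the last term equals $-\iota^*\d(\bar{\beta}(X))$. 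Hence $\bar{\omega}_L^{\flat}([X_{|L},Y_{|L}])=\iota^*\bar{\gamma}$, where $\bar{\gamma}=\mathcal{L}_{X}\bar{\beta}-\mathcal{L}_{Y}\bar{\alpha}-\d(\bar{\beta}(X))$ is precisely the argument of $\bar{\Lambda}^{\sharp}$ in \eqref{inte}.

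The final step, and the one I expect to require the most care, is transferring this leafwise identity to the pointwise statement \eqref{inte} on $M$. Since $[X,Y]$ is a section of $\mathcal{F}=\bar{\Lambda}^{\sharp}(S)$, it can be written locally as $\bar{\Lambda}^{\sharp}(\bar{\delta})$ for some $\bar{\delta}\in\Gamma(S)$ (lifting $[X,Y]$ through a local frame of $S$); the bridge relation then gives $\bar{\omega}_L^{\flat}([X_{|L},Y_{|L}])=\iota^*\bar{\delta}$, so $\iota^*(\bar{\gamma}-\bar{\delta})=0$ on every leaf, i.e. $\bar{\gamma}-\bar{\delta}$ annihilates $\bar{\Lambda}^{\sharp}(S)$ at every point. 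One then checks that $\bar{\gamma}$ is a section of $S$ — in the polysymplectic Example \ref{ex:polysymplectic} this is automatic, because there $\bar{\gamma}$ equals $\bar{\omega}^{\flat}$ of the bracket and $S=\mathrm{Im}\,\bar{\omega}^{\flat}$, and the same mechanism operates here leafwise — so that condition ii) of Definition \ref{def:poly-poisson} applies to $\bar{\gamma}-\bar{\delta}\in S$ and forces $\bar{\Lambda}^{\sharp}(\bar{\gamma}-\bar{\delta})=0$. Thus $\bar{\Lambda}^{\sharp}(\bar{\gamma})=\bar{\Lambda}^{\sharp}(\bar{\delta})=[X,Y]$, which is exactly \eqref{inte}. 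Since $\mathcal{F}(p)=\bar{\Lambda}^{\sharp}(S_p)$ by construction, it is the characteristic distribution of Theorem \ref{thm:foliation}, so $\mathcal{F}$ is the canonical $k$-polysymplectic foliation, completing the argument.
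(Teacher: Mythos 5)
Your strategy is the paper's own: hypotheses i) and ii) make the leafwise $\rk$-valued $2$-form $\bar{\omega}_L$ nondegenerate, closedness is assumed, so each leaf is $k$-polysymplectic and Proposition \ref{poly2} applies to it; the bridge relation $\bar{\omega}_L^{\flat}\bigl(\bar{\Lambda}^{\sharp}(\bar{\alpha})_{|L}\bigr)=\iota^*\bar{\alpha}$ then transports \eqref{eq:integrability:polysymplectic} into \eqref{inte}. Your computation of the right-hand side, including the sign flip via Lemma \ref{antisym} (legitimate here, since its proof uses only condition i)), is correct, and it faithfully expands the transfer step that the paper compresses into two lines. The problem sits exactly at the point you yourself flagged as delicate.

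The gap is the claim that $\bar{\gamma}={\mathcal{L}}_{X}\bar{\beta}-{\mathcal{L}}_{Y}\bar{\alpha}-\d(\bar{\beta}(X))$, with $X=\bar{\Lambda}^{\sharp}(\bar{\alpha})$ and $Y=\bar{\Lambda}^{\sharp}(\bar{\beta})$, is a section of $S$, which you justify by saying that the mechanism of Example \ref{ex:polysymplectic} ``operates here leafwise''. It does not: the leafwise polysymplectic identity only determines $\iota^*\bar{\gamma}$, i.e.\ the restriction of $\bar{\gamma}$ to vectors tangent to the leaf, whereas membership in $S$ constrains the whole covector, and its components transverse to $\mathcal{F}$ are invisible to every leaf (in Example \ref{ex:polysymplectic} the leaf is all of $M$, so nothing is lost there). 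Membership can genuinely fail under the hypotheses of the theorem: take $M=\R^3$, $k=1$, $S=\textrm{span}\{\d x,\d y\}$, $\bar{\Lambda}^{\sharp}(\d x)=e^z\partial_y$, $\bar{\Lambda}^{\sharp}(\d y)=-e^z\partial_x$. Conditions i) and ii) hold, $\mathcal{F}=\textrm{span}\{\partial_x,\partial_y\}$ is involutive, and each leaf $\{z=c\}$ carries the closed nondegenerate form $e^{-c}\,\d y\wedge\d x$; yet for $\bar{\alpha}=\d x$, $\bar{\beta}=\d y$ one computes $\bar{\gamma}=e^z\,\d z$, which lies in $S$ at no point, so the right-hand side of \eqref{inte} is not even defined (consistently, $\iota^*\bar{\gamma}=0=\bar{\omega}_L^{\flat}([X,Y]_{|L})$ on every leaf, so the leafwise identity holds and still says nothing about membership). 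To be fair, this lacuna is inherited from the paper, whose proof never addresses it; the statement is coherent only if Definition \ref{def:poly-poisson} iii) is read as asserting \eqref{inte} for those pairs whose $\bar{\gamma}$ does lie in $\Gamma(S)$. Under that reading the remainder of your argument does close the proof, and can be streamlined: once $\bar{\gamma}\in\Gamma(S)$ is given, both $[X,Y]$ and $\bar{\Lambda}^{\sharp}(\bar{\gamma})$ are tangent to each leaf and are sent by the injective map $\bar{\omega}_L^{\flat}$ to $\iota^*\bar{\gamma}$, hence coincide; this also avoids your smooth local lift $\bar{\delta}$ of $[X,Y]$, which is itself problematic where the rank of $\bar{\Lambda}^{\sharp}$ jumps (a pointwise lift is all your argument needs).
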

Next, we will discuss a construction which allows to obtain some examples of $k$-poly-Poisson manifolds. This can be seen as a foliated version of the one presented in Example \ref{ex:regular:polysymplectic}. For this purpose, we will use Theorem \ref{thm:inverse}.

Let $M_A$ be a Poisson manifold, with $A\in\{1,\ldots ,k\}$, and $\pi_A\colon M\to M_A$
a fibration. Denote by $\Lambda ^A$ the Poisson 2-vector of $M_A$ and by $\mathcal{F}_A$ the symplectic foliation in $M_A$. Assume that $\mathcal{F}$ is a generalized foliation on $M$ such that
\begin{itemize}
\item[i)] For every $p\in M$
\begin{equation}\label{eq:compat:foliation}
(T_p\pi _A)(\mathcal{F}(p))=\mathcal{F}_A(\pi_A(p)).
\end{equation}
\item[ii)] For every $p\in M$
\begin{equation}\label{eq:compat:forms}
\big ( \displaystyle \cap_{A=1}^k {\rm Ker\,}(T_p\pi _A)\big )\cap \mathcal{F}(p)=\{0\}.
\end{equation}
\end{itemize}
We will show that every leaf $L$ of $\mathcal{F}$ admits a $k$-polysymplectic structure. We will proceed
as in Section \ref{poly_manifolds}.

Let $p$ be a point of $M$, $L$ the leaf of $\mathcal{F}$ over the point $p$ and $L_A$ the symplectic leaf of
$M_A$ over the point $\pi _A(p)$, for every $A\in\{1,\ldots ,k\}$. First of all, we will define a
$k$-polysymplectic structure $(\omega ^1_L(p),\ldots ,\omega ^k_L(p))$ on the vector space
$\mathcal{F}(p)=T_pL$. In fact, if $\Omega _{L_A}(\pi _A(p))$ is the symplectic 2-form on the vector space
$\mathcal{F}_A(\pi _A(p))=T_{\pi _A(p)}L_A$ then
\begin{equation}\label{eq:forma:hoja}
\omega^A_L(p)=(T_p^*\pi _A)(\Omega _{L_A}(\pi _A(p))).
\end{equation}
\begin{remark}
If $u,v\in \mathcal{F}(p)=T_pL$, we have that
\[
(T_p\pi _A)(u)=\Lambda^\sharp_A(\widetilde{\alpha}_A),\; (T_p\pi _A)(v)=\Lambda^\sharp_A(\widetilde{\beta}_A), \qquad \mbox{ with }\widetilde{\alpha}_A,\widetilde{\beta}_A
\in T^*_{\pi_A(p)}M_A.
\]
Thus, since
\[
\Omega_{L_A}(\pi _A(p))(\Lambda _A^\sharp (\widetilde{\alpha}_A), \Lambda _A^\sharp (\widetilde{\beta}_A) )=\widetilde{\alpha}_A(\Lambda _A^\sharp (\widetilde{\beta}_A)),
\]
we obtain that
\begin{equation}\label{eq:forma:poly:hoja}
\omega ^A_L(u,v)=\widetilde{\alpha}_A(\Lambda _A^\sharp (\widetilde{\beta}_A)).
\end{equation}
\hfill$\diamond$
\end{remark}
From \eqref{eq:compat:foliation}, \eqref{eq:compat:forms} and \eqref{eq:forma:hoja}, it follows that
$(\omega ^1_L (p),\ldots ,\omega ^k_L(p) )$ is a $k$-polysymplectic structure on the vector space
$\mathcal{F}(p)=T_pL$, that is,
\begin{equation}\label{3.3v}
\cap _{A=1}^k\textrm{Ker }(\omega _L^A(p))=\{0 \}.
\end{equation}
Therefore, $L$ admits an almost $k$-polysymplectic structure $(\omega ^1_L,\ldots ,\omega ^k_L)$, i.e.,
$\omega ^A_L$ is a 2-form on $L$, for every $A\in\{1,\ldots ,k\}$, and
\[
\cap _{A=1}^k\textrm{Ker }\omega _L^A=\{0 \}.
\]
In addition, using \eqref{eq:compat:foliation}, \eqref{eq:compat:forms} and the fact that $\Omega_{L_A}$
is a closed 2-form on $L_A$, we deduce that the 2-forms $\omega_L^A$ are also closed and $(\omega^1_L,\ldots ,\omega ^k_L)$ is a $k$-polysymplectic structure on $L$. Consequently, $\mathcal{F}$ is a $k$-polysymplectic foliation.

Now, we will introduce a vector subspace $S(p)$ of $(T^1_k)_p^*M$ and a linear epimorphism
$\bar{\Lambda}^\sharp\colon S(p)\subseteq (T^1_k)^*M\to \mathcal{F}(p)\subseteq T_pM$.

From \eqref{eq:compat:foliation} and \eqref{eq:compat:forms}, we have that the linear map
$T_p\pi\colon \mathcal{F}(p)\to \mathcal{F}_1(\pi_1(p))\times \ldots \times \mathcal{F}_k(\pi_k(p))$ given by
\[
(T_p\pi )(u)=((T_p\pi _1)(u),\ldots ,(T_p\pi _k)(u)),\qquad \textrm{ for }u\in \mathcal{F}(p),
\]
is a linear monomorphism.

Next, we define
\begin{equation}\label{def:S}
\begin{array}{rcr}
S(p)&=&\{ ((T^*_p\pi _1)(\widetilde{\alpha}_1),\ldots ,(T^*_p\pi _k)(\widetilde{\alpha}_k))\in
(T^1_k)_p^*M \,|\, \exists u\in \mathcal{F}(p) \textrm{ and } \\ &&(T_p\pi)(u)=(\Lambda _1^\sharp (\widetilde{\alpha}_1),\ldots ,\Lambda _k^\sharp (\widetilde{\alpha}_k))\}
\end{array}
\end{equation}
and the linear epimorphism $\bar{\Lambda}^\sharp\colon S(p)\subseteq (T^1_k)_p^*M\to \mathcal{F}(p)$
by
\[
\bar{\Lambda}^\sharp  ((T^*_p\pi _1)(\widetilde{\alpha}_1),\ldots ,(T^*_p\pi _k)(\widetilde{\alpha}_k))=u,
\; \textrm{ if } (T_p\pi)(u)=(\Lambda _1^\sharp (\widetilde{\alpha}_1),\ldots ,\Lambda _k^\sharp (\widetilde{\alpha}_k)).
\]
Note that $\bar{\Lambda}^\sharp$ is well-defined because $T_p\pi$ is injective. Moreover, we will
see that the couple $(S(p),\bar{\Lambda}^\sharp )$ satisfies conditions i) and ii) in Theorem \ref{thm:inverse}.

First, if
\[
\bar{\alpha}=(\alpha _1,\ldots ,\alpha _k)=((T^*_p\pi _1)(\widetilde{\alpha}_1),\ldots ,(T^*_p\pi _k)(\widetilde{\alpha}_k))\in S(p)
\]
it follows that
\[
\alpha _A(\bar{\Lambda}^\sharp (\bar{\alpha}))=\widetilde{\alpha}_A((T_p\pi _A)(u))=\widetilde{\alpha}_A(\Lambda _A^\sharp (\widetilde{\alpha}_A))=0,\;\textrm{ for all }A.
\]
Furthermore, if for every $\bar{\beta}\in S(p)$
\[
\bar{\alpha}(\bar{\Lambda}^\sharp (\bar{\beta}))=0
\]
then we will prove that $\bar{\Lambda}^\sharp (\bar{\alpha})=0$. Indeed, suppose that $u=\bar{\Lambda}^\sharp
(\bar{\alpha})\in\mathcal{F}(p)$. This implies that $(T_p\pi_A)(u)=\Lambda^\sharp _A(\widetilde{\alpha}_A)$.
Now, let $v$ be an arbitrary element of $\mathcal{F}(p)$. We have that
\[
(T_p\pi _A)(v)=\Lambda_A^\sharp (\widetilde{\beta}_A),\qquad \textrm{ with }\widetilde{\beta}_A
\in T^*_{\pi_A(p)}M_A,
\]
for every $A\in \{1,\ldots ,k\}$. Thus,
\[
\bar{\beta}=((T^*_p\pi_1)(\widetilde{\beta}_1),\ldots ,(T^*_p\pi_k)(\widetilde{\beta}_k))\in S(p)
\]
and
\[
0=\alpha _A(\bar{\Lambda}^\sharp (\bar{\beta}))=\widetilde{\alpha}_A(\Lambda_A^\sharp (\widetilde{\beta}_A)), \qquad \textrm{ for every }A\in \{1,\ldots ,k\}.
\]
Therefore, using \eqref{eq:forma:poly:hoja}, it is deduced that
\[
0= \omega_L^A(p)(u,v), \qquad \textrm{ for every } A\in \{1,\ldots ,k\}.
\]
Consequently, from \eqref{3.3v}, we obtain that
\[
0=u=\bar{\Lambda}^\sharp (\bar{\alpha}).
\]
So, if the assignment
\[
p\in M\mapsto S(p)\subseteq (T^1_k)^*_pM
\]
defines a vector subbundle  of  $(T^1_k)^*M$ then, using Theorem \ref{thm:inverse}, it follows that the couple $(S,\bar{\Lambda}^\sharp )$ is a $k$-poly-Poisson structure on $M$ with canonical
$k$-poly-symplectic foliation $\mathcal{F}$.

We will see that this construction works in some interesting examples.

\begin{example}[The product of Poisson manifolds]
 Let $(M_A,\Lambda _A)$ be a Poisson manifold, with $A\in\{1,\ldots ,k\}$, and $M$ the product
manifold $M=M_1\times\ldots\times M_k$. Denote by $\pi _A\colon M\to M_A$ the canonical projection
and by $\mathcal{F}_A$ the symplectic foliation in $M_A$.

We consider the foliation $\mathcal{F}$ in $M$ whose characteristic space at the point $p=(p_1,\ldots ,p_k)
\in M_1\times \ldots \times M_k=M$ is
\[
\mathcal{F}(p)=\mathcal{F}(p_1,\ldots ,p_k)=\mathcal{F}_1(p_1)\times \ldots \times \mathcal{F}_k(p_k)
\subseteq T_{p_1}M_1\times \ldots \times T_{p_k}M_k\cong T_pM.
\]
It is clear that  \eqref{eq:compat:foliation} and \eqref{eq:compat:forms} hold. Thus, the leaves of
$\mathcal{F}$ admit a $k$-polysymplectic structure. If $p=(p_1,\ldots ,p_k)\in M_1\times \ldots \times M_k=M$
then, in this case, the map
\[
T_p\pi\colon \mathcal{F}(p)\to \mathcal{F}_1(p_1)\times \ldots \times \mathcal{F}_k(p_k)
\]
is a linear isomorphism. This implies that
\[
S(p)=\{ ((T_p^*\pi _1)(\widetilde{\alpha}_1),\ldots ,(T_p^*\pi _k)(\widetilde{\alpha}_k))\in (T^1_k)^*_pM
\,|\, \widetilde{\alpha}_A\in T^*_{\pi _A(p)}M_A,\textrm{ for all }A\}.
\]
In other words,
\[
S(p)\cong T^*_{\pi _1(p)}M_1\times \ldots \times  T^*_{\pi _k(p)}M_k\cong T^*_pM.
\]
Therefore, the assignment
\[
p\in M\mapsto S(p)\subseteq (T^1_k)^*_pM
\]
is a vector subbundle of $(T^1_k)^*_pM$ of rank $m$, with $m=\sum _{A=1}^km_A$ and $m_A=$ dim $M_A$, which may
be identified with the cotangent bundle $T^*M\to M$ to $M$.

Under this identification, the vector bundle morphism
\[
\bar{\Lambda}^\sharp \colon S\cong T^*M=T^*M_1\times \ldots\times T^*M_k\to
\mathcal{F}\subseteq TM=TM_1\times \ldots TM_k
\]
is given by
\[
\bar{\Lambda}^\sharp (\widetilde{\alpha}_1,\ldots ,\widetilde{\alpha}_k)=(\Lambda_1^\sharp
(\widetilde{\alpha}_1),\ldots ,\Lambda_k^\sharp (\widetilde{\alpha}_k)),
\]
for $(\widetilde{\alpha}_1,\ldots ,\widetilde{\alpha}_k)\in T^*M_1\times \ldots \times T^*M_k$.
\hfill$\diamond$
\end{example}

\begin{example}[The Whitney sum $E^*_k=E^*\oplus \stackrel{(k}{\ldots}\oplus E^*$, with $E$
a Lie algebroid]\label{ex:whitney-algebroid} Let $\tau _E\colon E\to Q$ be a Lie algebroid of rank $n$ over a manifold $Q$ of dimension $m$, $\Lambda_ {E^*}$ the fiberwise linear Poisson structure on the
dual bundle $E^*$ to $E$ and $\mathcal{F}_{E^*}$ the symplectic foliation associated with $\Lambda_{E^*}$ (see Appendix).

We consider the Whitney sum $E^*_k=E^*\oplus \ldots \oplus E^*$. It is a vector bundle over $Q$ whose fiber at
the point $q\in Q$ is $(E^*_k)_q=E_q^*\times \ldots \times E_q^*$. We will denote by $\tau _{E^*_k}\colon
E^*_k\to Q$ the vector bundle projection.

If $\bar{\alpha}=(\alpha _1,\ldots ,\alpha _k)\in (E^*_k)_q$ we have that
\[
T_{\bar{\alpha}}E^*_k=\{ (\widetilde{v}_1,\ldots ,\widetilde{v}_k)\in T_{\alpha_1}E^*\times \ldots \times
T_{\alpha _k}E^*\,|\, (T_{\alpha _A}\tau _{E^*})(\widetilde{v}_A)=(T_{\alpha _B}\tau _{E^*})(\widetilde{v}_B),
\textrm{ for all }A\textrm{ and }B\},
\]
where $\tau_{E^*}\colon E^*\to Q$ is the vector bundle projection. Thus, if $\widetilde{\alpha}_A
\in T^*_{\alpha_A}E^*$ for every $A$ then, using \eqref{eq:relation:lin:Poisson} (see Appendix), it follows that
\[
(\Lambda_{E^*}^\sharp (\widetilde{\alpha}_1),\ldots ,\Lambda_{E^*}^\sharp (\widetilde{\alpha}_k))
\in T_{\bar{\alpha}}E^*_k
\]
if and only if
\[
\widetilde{\alpha}_A\smalcirc \, ^{\textrm{v}}_{\alpha_A}\smalcirc
\rho^*_{E}{}_{|T^*_qQ}=\widetilde{\alpha}_B\smalcirc \, ^{\textrm{v}}_{\alpha_B}\smalcirc
\rho^*_{E}{}_{|T^*_qQ}, \qquad \textrm{ for all }A\textrm{ and }B,
\]
$\rho_E\colon E\to TQ$ being the anchor map of $E$, $\rho ^*_E\colon T^*Q\to E^*$ the dual
morphism of $\rho _E$ and $^{\textrm{v}}_{\alpha_C}\colon E^*_q\to T_{\alpha _C}E^*_q$ the
canonical isomorphism between $E^*_q$ and $T_{\alpha _C}E^*_q$, for every $C\in \{1,\ldots ,k\}$.

Denote by $\pi_A\colon E^*_k\to E^*$ the canonical projection given by
\[
\pi _A(\alpha_1,\ldots ,\alpha _k)=\alpha _A,\qquad \textrm{ for }(\alpha _1,\ldots ,\alpha _k)\in E^*_k.
\]
Then, it is clear that $\pi_A\colon E^*_k\to E^*$  is a surjective submersion. In fact,
\begin{equation}\label{eq:Tangent-PiA}
(T_{\bar{\alpha}}\pi _A)(\widetilde{v}_1,\ldots ,\widetilde{v}_k)=\widetilde{v}_A,
\qquad \textrm{ for } (\widetilde{v}_1,\ldots ,\widetilde{v}_k)\in T_{\bar{\alpha}}E^*_k.
\end{equation}
Now, let $\mathcal{F}$ be the generalized distribution on $E^*_k$ whose characteristic space at the point
$\bar{\alpha}=(\alpha _1,\ldots ,\alpha _k)\in (E^*_k)_q$ is
\begin{equation}\label{eq:Def-Foliation}
\begin{array}{rcr}
\mathcal{F}(\bar{\alpha})&=&\{ (\Lambda^\sharp _{E^*}(\widetilde{\alpha}_1),\ldots ,\Lambda^\sharp
_{E^*}(\widetilde{\alpha}_k))\in T_{\alpha _1}E^*\times \ldots \times T_{\alpha _k}E^* \,|\,
\widetilde{\alpha}_A\in T^*_{\alpha _A}E^* \textrm{ and } \\[8pt]
&& \widetilde{\alpha}_A\smalcirc \,^{\textrm{v}}_{\pi_A(\bar{\alpha})}=\widetilde{\alpha}_B\smalcirc
\,^{\textrm{v}}_{\pi_B(\bar{\alpha})}\}
\end{array}
\end{equation}
Note that the condition $\widetilde{\alpha}_A\smalcirc \,^{\textrm{v}}_{\pi_A(\bar{\alpha})}
=\widetilde{\alpha}_B\smalcirc \,^{\textrm{v}}_{\pi_B(\bar{\alpha})}$, for all $A$ and $B$, implies that
\[
(\Lambda^\sharp _{E^*}(\widetilde{\alpha}_1),\ldots ,\Lambda^\sharp _{E^*}(\widetilde{\alpha}_k)) \in
T_{\bar{\alpha}}E^*_k.
\]
Next, we will prove that $\mathcal{F}$ is a generalized foliation on $E^*_k$. First of all, we will see that
$\mathcal{F}$ is locally finitely generated.

Let $(q^i)$ be local coordinates on an open subset $U\subseteq Q$ and $\{e_\alpha\}$ be a local basis
of $\Gamma (\tau_E^{-1}(U))$. Denote by $(q^i,p_\alpha )$ the corresponding local coordinates on
$E^*$. Then, we have local coordinates $(q^i,p^A_\alpha )$ on $E^*_k=E^*\oplus \ldots \oplus E^*$.
In fact, if $\bar{\alpha}\in \tau_{E^*_k}^{-1}(U)$
\[
q^i(\overline{\alpha })=q^i(\tau_{E^*_k}(\bar{\alpha})),\qquad
p^A_\alpha (\overline{\alpha })=p_\alpha (\pi _A (\bar{\alpha})),
\]
for $i\in \{1,\ldots , m\}$, $\alpha \in\{1,\ldots ,n\}$ and $A\in \{1,\ldots ,k\}$.

Moreover,
\begin{align*}
\{ (0,\ldots, \stackrel{A)}{\Lambda^\sharp_{E^*}(\d q^i)},\ldots ,0), (\Lambda^\sharp_{E^*}(\d p_\alpha),\ldots , \Lambda^\sharp_{E^*}(\d p_\alpha ))&\;\; | &i\in \{1,\ldots , m\},\, \alpha \in\{1,\ldots ,n\} \\
& &\textrm{  and } A\in \{1,\ldots ,k\}\}
\end{align*}
is a local generator system of $\mathcal{F}$. Thus, using \eqref{eq:linear:Poisson}, it follows that
\[
\{ \rho ^i_\beta \frac{\partial }{\partial p^A_\beta},  \rho ^i_\alpha \frac{\partial }{\partial q^i} +{\mathcal C}_{\alpha \beta}^\gamma p^B_\gamma \frac{\partial}{\partial p^B_\beta}  \,|\,
i\in \{1,\ldots , m\},\, \alpha \in\{1,\ldots ,n\}
\textrm{  and } A\in \{1,\ldots ,k\}\}
\]
is a local generator system of $\mathcal{F}$, where $(\rho^i_\beta ,{\mathcal C}_{\alpha \beta}^\gamma)$ are the local structure functions of $E$. Therefore, $\mathcal{F}$ is locally
finitely generated.

On the other hand, from \eqref{estruc1}, we deduce that
\[
\begin{array}{rcl}
\left [ \rho ^i_\alpha \frac{\partial }{\partial p^A_\alpha },\rho ^j_\beta \frac{\partial }{\partial p^B_\beta} \right ] &=&0\\
\left [ \rho ^i_\alpha  \frac{\partial }{\partial p^A_\alpha},  \rho ^j_\beta \frac{\partial }{\partial q^j} +
{\mathcal C}_{\beta\gamma }^\mu p^B_\mu \frac{\partial}{\partial p^B _\gamma } \right ] &=&-\frac{\partial \rho ^i_\beta }{\partial q^j}(\rho ^j_\gamma \frac{\partial}{\partial p^A_\gamma} )\\
\left [  \rho ^i_\alpha \frac{\partial }{\partial q^i} +
{\mathcal C}_{\alpha \beta}^\gamma p^A_\gamma \frac{\partial}{\partial p^A_\beta} ,\rho ^j_\mu \frac{\partial }{\partial q^j} + {\mathcal C}_{\mu\nu }^\theta p^B_\theta \frac{\partial}{\partial p^B_\nu}\right ] &=&   {\mathcal C}_{\alpha \mu}^\nu (\rho ^i_\nu \frac{\partial }{\partial q^i}-{\mathcal C}_{\beta\nu}^\theta
p^A_{\theta}\frac{\partial}{\partial p^A_\beta })-p^A_\theta \frac{\partial {\mathcal C}^\theta _{\alpha \mu}}{\partial q^i}(\rho ^i_\beta \frac{\partial}{\partial p^A_\beta}).
\end{array}
\]
This implies that $\mathcal{F}$ is involutive. Consequently, $\mathcal{F}$ is a generalized foliation on
$E^*_k$.

Now, we will prove that conditions \eqref{eq:compat:foliation} and \eqref{eq:compat:forms} hold
for the foliation $\mathcal{F}$. We have that
\[
\mathcal{F}_{E^*}(\alpha )=\Lambda ^\sharp _{E^*}(T^*_\alpha E^*),\qquad \textrm{ for }\alpha \in E^*.
\]
Thus, from \eqref{eq:Tangent-PiA} and \eqref{eq:Def-Foliation}, it follows that
\[
(T_{\bar{\alpha}}\pi _A)(\mathcal{F}(\bar{\alpha}))\subseteq \mathcal{F}_{E^*}(\pi _A(\bar{\alpha})),\qquad \textrm{ for }\bar{\alpha}\in E^*_k\textrm{ and } A\in\{1,\ldots ,k\}.
\]
Moreover, if $A\in\{1,\ldots ,k\}$ and $\widetilde{v}_A\in \mathcal{F}_{E^*}(\pi _A(\bar{\alpha}))$ then
\[
\widetilde{v}_A=\Lambda_{E^*}^ \sharp (\widetilde{\alpha}_A),\qquad \textrm{ with }\widetilde{\alpha}_A
\in T^*_{\pi _A(\bar{\alpha})}E^*.
\]
In addition, it is easy to prove that we can choose $\widetilde{\alpha}_B\in T^*_{\pi _B(\bar{\alpha})}E^*$, with $B\in\{1,\ldots ,k\}$, satisfying
\[
\widetilde{\alpha}_B\smalcirc \,^{\textrm{v}}_{\pi_B(\bar{\alpha})}=
\widetilde{\alpha}_A\smalcirc \,^{\textrm{v}}_{\pi_A(\bar{\alpha})}.
\]
Therefore, $(\Lambda _{E^*}^\sharp (\widetilde{\alpha}_1),\ldots ,\Lambda _{E^*}^\sharp (\widetilde{\alpha}_k))\in \mathcal{F}(\bar{\alpha})$ and, using \eqref{eq:Tangent-PiA}, we deduce that
\[
(T_{\bar{\alpha}}\pi_A)(\Lambda _{E^*}^\sharp (\widetilde{\alpha}_1),\ldots ,\Lambda _{E^*}^\sharp (\widetilde{\alpha}_k))=\Lambda_{E^*}^\sharp (\widetilde{\alpha}_A)=\widetilde{v}_A.
\]
This proves \eqref{eq:compat:foliation}  for the foliation $\mathcal{F}$.

On the other hand, using again \eqref{eq:Tangent-PiA}, we obtain that
\[
\displaystyle \cap _{A=1}^k \textrm{Ker }(T_{\bar{\alpha}}\pi _A)=\{0\}
\]
and, as a consequence, \eqref{eq:compat:forms} also holds for the foliation $\mathcal{F}$.

Next, we will see that the assignment
\[
\bar{\alpha}\in E^*_k\to S(\bar{\alpha})\subseteq (T^1_k)^*_{\bar{\alpha}}E_k^*
\]
defines a vector subbundle of $(T^1_k)^*E_k^*$.

In fact, if $\bar{\alpha}\in E^*_k$ then, from  \eqref{def:S}, we have that
\begin{equation}\label{eq:dist:whitney}
\begin{array}{rcr}
S(\bar{\alpha})&=&\{ ((T^*_{\bar{\alpha}}\pi _1)(\widetilde{\alpha}_1),\ldots ,(T^*_{\bar{\alpha}}\pi _k)(\widetilde{\alpha}_k))\in
(T^1_k)_{\bar{\alpha}}^*E^*_k  \,|\, \widetilde{\alpha}_A\in T^*_{\pi _A(\bar{\alpha})}E^*
\textrm{ and } \\ &&   \widetilde{\alpha}_A\smalcirc \,^{\textrm{v}}_{\pi_A(\bar{\alpha})}=
\widetilde{\alpha}_B\smalcirc \,^{\textrm{v}}_{\pi_B(\bar{\alpha})} \textrm{ for all $A$ and $B$}\}
\end{array}
\end{equation}
This implies that the assignment
\[
\bar{\alpha}\in E^*_k\to S(\bar{\alpha})\subseteq (T^1_k)^*_{\bar{\alpha}}E_k^*
\]
defines a vector subbundle of $(T^1_k)^*E^*_k$ of rank $mk+n$.

Moreover, in this case, the linear epimorphism
\[
\bar{\Lambda}^\sharp\colon S(\bar{\alpha})\subseteq (T^1_k)_{\bar{\alpha}}^*E^*_k
\to \mathcal{F}(\bar{\alpha})\subseteq T_{\bar{\alpha}}E^*_k\subseteq T_{\pi_1(\bar{\alpha})}E^*\times \ldots \times T_{\pi_k(\bar{\alpha})}E^*
\]
is given by
\[
\bar{\Lambda}^\sharp ((T^*_{\bar{\alpha}}\pi _1)(\widetilde{\alpha}_1),\ldots ,(T^*_{\bar{\alpha}}\pi _k)(\widetilde{\alpha}_k))=
(\Lambda_{E^*}^\sharp (\widetilde{\alpha}_1),\ldots ,\Lambda_{E^*}^\sharp (\widetilde{\alpha}_k)).
\]
From the above results, it can be concluded that the couple $(S,\bar{\Lambda}^\sharp)$ is a $k$-poly-Poisson
structure on $E^*_k$.\hfill$\diamond$

\end{example}

\begin{remark}\label{rmk:whitney}
If we consider the particular case when $E$ is the standard Lie algebroid $TQ$ then it is clear that $E^*_k$ is
the cotangent bundle of $k$-covelocities $\tkqh$. In addition, a direct computation proves that the couple
$(S,\bar{\Lambda}^\sharp)$ is just the $k$-poly-Poisson structure induced by the $k$-polysymplectic structure on
$\tkqh$ which was described in Example \ref{canexample}. Note that the fiberwise linear Poisson structure on
$T^*Q$ induced by the standard Lie algebroid structure on $TQ$ is just the canonical Poisson structure
associated to the canonical symplectic structure on $T^*Q$ (see Appendix).

On the other hand, if $E$ is a Lie algebra $\mathfrak{g}$, then $E^*_k$ is just $k$ copies of $\mathfrak{g}^*$,
the dual of the  Lie algebra, which will be denoted by $\mathfrak{g}^*_k$. Moreover, using the trivialization
$T^*\mathfrak{g}^*\cong \mathfrak{g}\times\mathfrak{g}^*$, it is deduced that $(T^1_k)^*\mathfrak{g}^*_k\cong
\mathfrak{g}^*_k\times (\mathfrak{g}_k\times \stackrel{(k}{\ldots}\times \mathfrak{g}_k)$, $\mathfrak{g}_k$
being $k$ copies of $\mathfrak{g}$. Under this identification, given a point $\mu =(\mu _1,\ldots ,\mu _k)\in
\mathfrak{g}^*_k$, $S$ is defined by
\begin{equation}\label{eq:algebra:S}
S(\mu )=\{ (\mu, ((\xi,0,\ldots,0),(0,\xi,\dots,0),\ldots,(0,0,\ldots,\xi))) \, |\, \xi\in\mathfrak{g}\}
\end{equation}
and $\bar{\Lambda}^\sharp$ is characterized by the following expression
\begin{eqnarray}\label{eq:algebra:lambda}
\bar{\Lambda}^{\sharp}_{\mu}(\mu, ((\xi,0,\ldots,0),(0,\xi,\dots,0),\ldots,(0,0,\ldots,\xi)))
&=&(ad^*_{\xi}\mu_1,ad^*_{\xi}\mu_2,\ldots,ad^*_{\xi}\mu_k)\\
&=&(\xi_{\mathfrak{g}^*}(\mu _1),\ldots ,\xi_{\mathfrak{g}^*}(\mu _k)).
\end{eqnarray}
Therefore, from \eqref{eq:inf:k:coadjoint}, we have that each polysymplectic leaf is a $k$-coadjoint orbit of
Example \ref{ex:k-coadjoint orbit}.

Another interesting example, which generalizes the previous one, is the Atiyah algebroid of
a $G$-principal bundle $Q\to Q/G=M$. In this case, the Lie algebroid is the vector bundle $TQ/G\to M$.
In section \ref{sec:reduction}, we will describe the poly-Poisson structure on $T^*Q/G\oplus \stackrel{(k}{\ldots} \oplus T^*Q/G$ as a consequence of a reduction procedure.\hfill$\diamond$
\end{remark}
\begin{example}\label{ex:poly:frame}
An example closely related with Example \ref{ex:whitney-algebroid} is the following one. Given
a vector bundle $\tau _E\colon E\to Q$ of rank $n$ over a manifold $Q$ of dimension $m$, its frame bundle
is the bundle $LE\to M$ whose fiber at $q\in Q$ is the set of linear isomorphisms at $E_q$, i.e., $u\in (LE)_q$
if $u\colon \R^n \to E_q$ is a linear isomorphism.

The frame bundle is a $GL(n)$-principal bundle. More precisely, if $u\in LE$ and $g\in GL(n)$ then
$u\cdot g$ is just the composition. Moreover, we have that $LE$ is an open subset of the Whitney sum
$E^*_n=E^*\oplus \stackrel{(n}{\ldots} \oplus E^*$.

Now, if $\tau _E\colon E\to Q$ is a Lie algebroid with $\Lambda_ {E^*}$ the fiberwise linear Poisson structure on the dual bundle $E^*$ to $E$, then $LE$ is endowed with a poly-Poisson structure. Consider the maps $\pi _A\colon LE\to E^*$ given by
\[
u\in LE \mapsto \pi_A(u)=\textrm{pr}_A\circ u^{-1}.
\]
Note that these maps are just the restriction to $LE$ of the canonical projections $\pi _A\colon E^*_n\to E^*$, $A\in \{1,\ldots ,n\}$, of Example \ref{ex:whitney-algebroid}. Thus, taking the foliation $\mathcal{F}$ of the same example restricted to $LE$, one immediately deduce that $LE$ is endowed with a poly-Poisson structure.\hfill$\diamond$

\end{example}
Next, it will be considered a different type of example of a poly-Poisson structure, motivated by singular Lagrangian systems.
\begin{example}[$k$-poly-Poisson structures of Dirac type]\label{ex:Dirac:type}

One of the typical examples of Poisson structures is given by the so-called Poisson structures of Dirac type,
which are useful for dealing with singular Lagrangian systems. Let $(M,\omega)$ be a symplectic manifold and let $D$ denote a
regular involutive distribution on $M$ such that $D\cap D^{\perp}=0$ where $\perp$ is used to denote the symplectic
orthogonal (recall that given a vector subspace $W\subseteq T_pM$, $W^{\perp}= (\omega^{\flat})^{-1}(W^{\circ})$
where $^{\circ}$ denotes the annihilator). Then, one may define a Poisson structure $\Lambda_D$ on $M$ just by
describing the associated bundle map $\Lambda_D^\sharp$ in the following way:
\[
\Lambda_D^{\sharp}(\alpha)= p\circ(\omega^{\flat})^{-1}(\alpha),  \qquad (\alpha \in T^*M),
\]
$p:TM\rightarrow D$ being the projection over $D$ (note that $TM=D\oplus D^\perp$). It
can be seen that $\Lambda_D^{\sharp}$ defined in this way is a Poisson structure. Furthermore, the associated symplectic foliation is just $D$ (for more details see,
for instance, \cite[3.8]{va}).

We are going to extend this construction to the poly-Poisson context.
Let $(\omega_1,\ldots,\omega_k)$ be a $k$-polysymplectic structure on a manifold $M$ of dimension $m$ and $D$ be a
regular and involutive distribution of rank $r$ on $M$. In addition, assume that $D$ satisfies
\begin{equation}\label{eq:symplectic:sub}
D(p)\cap D^{\perp}(p)=\{0\},\qquad \textrm{ for every }p\in M,
\end{equation}
where, in  this case, $D^{\perp}$ is defined by
\[%begin{equation}\label{7}
D^{\perp}(p):=\cap_{i=1}^k({\omega_i}^{\flat})^{-1}( D^{\circ}(p) ).
\]%end{equation}
First, define $S$ as the vector subbundle of $(T^1_k)^*M$ whose fiber at $p\in M$ is
\[
S(p)=\{\bar{\alpha}=(\alpha_1,\ldots,\alpha_k)\in (T^1_k)_p^*M \, |\,
\exists X\in D(p) \textrm{ such that } \bar{\alpha}_{|D(p)}=\bar{\omega}^{\flat}(X)_{|D(p)}\}.
\]
Note that, if $\iota _D(p)\colon D(p)\to T_pM$ is the canonical inclusion, then the dimension of the space $\bar{S}(p)=[(\iota _D(p)^*\times \ldots \times \iota _D(p)^*)\smalcirc \bar{\omega}^\flat(p)](D(p))$ is just $r=\mbox{dim }D(p)=\mbox{rank }D$ (this follows using
\eqref{eq:symplectic:sub}). Thus, since
\[
S(p)=(\iota _D(p)^*\times \ldots \times \iota _D(p)^*)^{-1}(\bar{S}(p))
\]
we conclude that the rank of $S$ is $r+(m-r)k$.

Next, the morphism $\bar{\Lambda}^{\sharp}\colon S\to TM$ is defined as follows: If $\bar{\alpha}\in S$ then there exists $X\in D$ such that $\bar{\omega}^\flat(X)_{|D}=\bar{\alpha}_{|D}$. Thus,
\[
\bar{\Lambda}^{\sharp}(\bar{\alpha})=X.
\]
$\bar{\Lambda}^\sharp$ is well defined: if $X,Y\in D$ and $\bar{\omega}^\flat(X)_{|D}=\bar{\omega}^\flat(Y)_{|D}$, then $X-Y\in D^\perp$. Therefore, using that $D\cap D^\perp=\{ 0\}$, we get $X=Y$.
Let us show that the conditions of Theorem \ref{thm:inverse} hold.

If $\bar{\alpha}\in S$ then there exists $X\in D$ such that $\bar{\omega}^{\flat}(X)_{|D} =\bar{\alpha}_{|D}$.
Thus,
\[
\bar{\alpha}(\bar{\Lambda}^\sharp (\bar{\alpha}))=\bar{\alpha}(X)=\bar{\omega}^\flat (X)(X)=0,
\]
where, in the last equality, it has been used i) in Proposition \ref{poly2}.

Next, assume that $\bar{\alpha}\in S$ satisfies that  $\bar{\alpha}(\bar{\Lambda}^{\sharp}(\bar{\beta}))=0$ for
every $\bar{\beta}\in S$. As a consequence,
\begin{equation}\label{3.12}
\bar{\alpha}(Y)=0,
\end{equation}
for every $Y\in D$. But, if $\bar{\alpha}_{|D}=\bar{\omega}^{\flat}(X)_{|D}$, with $X\in D$, from \eqref{3.12}
it is deduced that $X\in D\cap D^\perp$. Using \eqref{eq:symplectic:sub},
$\bar{\Lambda}^\sharp(\bar{\alpha})=X=0$, so ii) in Theorem \ref{thm:inverse} holds.

Finally, a direct computation shows that the generalized distribution
\[
p\in M\to \mathcal{F}(p)=\bar{\Lambda}^\sharp (S(p))\subseteq T_pM,
\]
associated with the pair $(S,\bar{\Lambda}^\sharp )$ is just $D$, which is involutive by hypothesis. Morever,
for every leaf $L$, the $\R^k$-valued two form $\bar{\omega}_L$ is just the pull-back to $L$ of $\bar{\omega}$. $\bar{\omega}_L$ is clearly closed and the nondegeneracy \eqref{poly-cond} is
a consequence of \eqref{eq:symplectic:sub}.

Thus, using Theorem \ref{thm:inverse}, $(M,S,\bar{\Lambda}^\sharp)$ is a poly-Poisson manifold with polysymplectic foliation $D$.
\end{example}

%%%%%%%%%%%%%%%%%%%%%%%%%%%%%%%%%%%%%%%%%%%%%%%%%%%%%%%%%%%%%%%%%%%%%%%%%%%%%%%%%%%%%%
%%%%%%%%%%%  Section: Reduction of polysymplectic structures   %%%%%%%%%%%%%%%%%%%%%%%
%%%%%%%%%%%%%%%%%%%%%%%%%%%%%%%%%%%%%%%%%%%%%%%%%%%%%%%%%%%%%%%%%%%%%%%%%%%%%%%%%%%%%%

\section{Poly-Poisson structures and reduction of polysymplectic structures}\label{sec:reduction}

As it is well known, one can obtain Poisson manifolds from symplectic manifolds through a reduction 
procedure. Let us recall some details of the process. Let $(M,\omega )$ be a symplectic manifold 
and $G$ be a Lie group acting freeely and properly on $M$.
If the action $\Phi \colon G\times M\to M$ preserves the symplectic structure (i.e., $\Phi 
_g^*\omega =\omega$, for all $g\in G$) then the orbit space $M/G$ inherits a Poisson structure 
$\Lambda$ in such a way
that the bundle projection $\pi \colon M\to M/G$ is a Poisson map, that is, $\Lambda ^\sharp (\pi 
(x)) =T_x\pi \circ (\omega ^\flat (x))^{-1}\circ T^ *_x\pi$, for $x\in M$. A particular example of 
this situation is the following one. If the action of $G$ on $Q$ is free and proper then it can be 
lifted to an action on $T^*Q$ which preserves the canonical symplectic structure $\omega _Q$. The 
corresponding Poisson structure on $T^*Q/G$ is the so-called \emph{Poisson-Sternberg structure}, 
and it may be seen as the linear Poisson structure associated to the Atiyah algebroid on $TQ/G$ 
(see \cite{Ma}).

We have introduced poly-Poisson structures to play the same role with respect to polysymplectic 
manifolds, so it is expected that the reduction of a polysymplectic
manifold is a poly-Poisson manifold. The next theorem asserts that this is true under certain 
regularity conditions.

\begin{theorem}\label{reduction}
Let $(M,\omega_1,\ldots,\omega_k)$ be a polysymplectic manifold and $G$ be a Lie group that acts over $M$
$(\Phi:G\times M\rightarrow M)$ freely, properly and satisfies $\Phi_g^*\omega_i=\omega_i$, for every $g\in G$ and $i\in\{1,\ldots ,k\}$.
Suppose that $V\pi$ is the vertical bundle of the principal bundle projection $\pi:M\rightarrow M/G$ and that
the following conditions hold:
\begin{itemize}
\item[i)] $\textrm{Im }(\bar{\omega}^{\flat})\cap {(V\pi)^{\circ}}^k$ is a vector subbundle,
where ${(V\pi)^{\circ}}^k=(V\pi)^{\circ}\times\stackrel{(k}\ldots\times {(V\pi)^{\circ}}$.
%is k copies of the anihilator of the vertical distribution.
\item[ii)]
$(\bar{\omega}^{\flat}(p))^{-1}\left(({(V_p\pi)^{\perp}})^{\circ k}\cap (V_p \pi)^{\circ k}\cap S(p) \right) \subseteq V_p\pi$, where $(V\pi)^\perp$ is the polysymplectic
orthogonal, that is, $(V\pi)^\perp =\bigcap _{i=1}^k (\omega^\flat_i)^{-1}((V\pi )^\circ )$ and
$S$ is the image of $\bar{\omega}^{\flat}$, that is, $S=\textrm{Im
}(\bar{\omega}^{\flat})$.
\end{itemize}
Then, there is a natural k-poly-Poisson structure on $M/G$.
\end{theorem}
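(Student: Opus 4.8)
The plan is to transport the poly-Poisson structure of $(M,\bar\omega)$ furnished by Example \ref{ex:polysymplectic} (with $S=\textrm{Im}(\bar\omega^\flat)$ and $\bar\Lambda^\sharp=(\bar\omega^\flat)^{-1}_{|S}$) down to $N:=M/G$, imitating the symplectic-to-Poisson reduction recalled at the start of the section. Throughout, write $\pi^\ast_x\colon (T^1_k)^*_{\pi(x)}N\to (T^1_k)^*_xM$ for the componentwise cotangent lift; since $\pi$ is a surjective submersion, $\pi^\ast_x$ is injective with image $(V_x\pi)^{\circ k}$. The invariance $\Phi_g^*\omega_i=\omega_i$ makes $\bar\omega^\flat$ a $G$-equivariant morphism, so $S$, $(V\pi)^\circ$ and $(V\pi)^\perp$ are $G$-invariant subbundles. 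Set $W:=S\cap (V\pi)^{\circ k}$, which is a vector subbundle of $(T^1_k)^*M$ by hypothesis i). Being $G$-invariant and consisting of basic covectors, $W$ descends to a unique subbundle $\bar S\subseteq (T^1_k)^*N$ characterized by $\bar S_{\pi(x)}=(\pi^\ast_x)^{-1}(W_x)$, the right-hand side being independent of $x$ in the fibre by invariance.

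For $\bar\gamma\in \bar S_{\pi(x)}$ put $\bar\alpha:=\pi^\ast_x\bar\gamma\in W_x\subseteq S_x$, let $X:=(\bar\omega^\flat_x)^{-1}(\bar\alpha)\in T_xM$, and define $\bar\Lambda^\sharp_N(\bar\gamma):=T_x\pi(X)$. Using $G$-equivariance of $\bar\omega^\flat$ together with $\pi\circ\Phi_g=\pi$, this does not depend on the chosen $x\in\pi^{-1}(\pi(x))$, so $\bar\Lambda^\sharp_N\colon \bar S\to TN$ is a well-defined (and, via local frames, smooth) vector bundle morphism. Axiom i) of Definition \ref{def:poly-poisson} is then immediate from the duality between $T_x\pi$ and $\pi^\ast_x$ and skew-symmetry of $\bar\omega^\flat$ (Proposition \ref{poly2} i)):
\[
\bar\gamma(\bar\Lambda^\sharp_N(\bar\gamma))=\bar\gamma(T_x\pi(X))=(\pi^\ast_x\bar\gamma)(X)=\bar\omega^\flat_x(X)(X)=0.
\]

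The verification of axiom ii) is the step where the technical hypothesis ii) is consumed, and I expect it to be the main point to get exactly right. Suppose $\bar\gamma(\bar\Lambda^\sharp_N(\bar\delta))=0$ for all $\bar\delta\in\bar S_{\pi(x)}$. Writing $\bar\alpha=\pi^\ast_x\bar\gamma=\bar\omega^\flat_x(X)$ and, for each $\bar\delta$, $\bar\beta=\pi^\ast_x\bar\delta=\bar\omega^\flat_x(Y)$, the hypothesis reads $\bar\omega^\flat_x(X)(Y)=0$. As $\bar\delta$ ranges over $\bar S_{\pi(x)}$, the covector $\bar\beta$ ranges over all of $W_x$, whence $Y$ ranges over $(\bar\omega^\flat_x)^{-1}(V_x\pi)^{\circ k}=(V_x\pi)^\perp$, the last equality being exactly the definition of the polysymplectic orthogonal. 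Thus $\bar\omega^\flat_x(X)\in ((V_x\pi)^\perp)^{\circ k}$; combined with $\bar\omega^\flat_x(X)=\bar\alpha\in W_x=(V_x\pi)^{\circ k}\cap S_x$, this gives $X\in (\bar\omega^\flat_x)^{-1}\big(((V_x\pi)^\perp)^{\circ k}\cap (V_x\pi)^{\circ k}\cap S_x\big)$. Hypothesis ii) forces $X\in V_x\pi$, so $\bar\Lambda^\sharp_N(\bar\gamma)=T_x\pi(X)=0$, as required.

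Finally, for axiom iii), take $\bar\gamma,\bar\delta\in\Gamma(\bar S)$ and let $\bar\alpha:=\pi^\ast\bar\gamma$, $\bar\beta:=\pi^\ast\bar\delta\in\Gamma(W)$, which are $G$-invariant. Then $X:=\bar\Lambda^\sharp(\bar\alpha)$ and $Y:=\bar\Lambda^\sharp(\bar\beta)$ are $G$-invariant vector fields on $M$, respectively $\pi$-related to $\bar\Lambda^\sharp_N(\bar\gamma)$ and $\bar\Lambda^\sharp_N(\bar\delta)$; since $\pi$-relatedness is preserved by Lie brackets, $[\bar\Lambda^\sharp_N(\bar\gamma),\bar\Lambda^\sharp_N(\bar\delta)]$ is $\pi$-related to $[X,Y]$. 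Setting
\[
\bar\eta:=\mathcal{L}_{\bar\Lambda^\sharp_N(\bar\gamma)}\bar\delta-\mathcal{L}_{\bar\Lambda^\sharp_N(\bar\delta)}\bar\gamma-\d\big(\bar\delta(\bar\Lambda^\sharp_N(\bar\gamma))\big),
\]
naturality of $\mathcal{L}$ and $\d$ under the $\pi$-related fields yields $\pi^\ast\bar\eta=\mathcal{L}_X\bar\beta-\mathcal{L}_Y\bar\alpha-\d(\bar\beta(X))$, which by the integrability on $M$ (Example \ref{ex:polysymplectic}, i.e.\ Proposition \ref{poly2} iii)) equals $\bar\omega^\flat([X,Y])\in\Gamma(S)$. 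As $\pi^\ast\bar\eta$ also lies in $(V\pi)^{\circ k}$, it belongs to $W$, so $\bar\eta\in\Gamma(\bar S)$; applying $T\pi$ gives $\bar\Lambda^\sharp_N(\bar\eta)=T\pi([X,Y])=[\bar\Lambda^\sharp_N(\bar\gamma),\bar\Lambda^\sharp_N(\bar\delta)]$, which is precisely \eqref{inte}. Hence $(M/G,\bar S,\bar\Lambda^\sharp_N)$ is a $k$-poly-Poisson manifold.
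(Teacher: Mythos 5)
Your proposal is correct and takes essentially the same route as the paper's own proof: you construct the same subbundle $\bar S\cong\left(S\cap {(V\pi)^{\circ}}^k\right)/G$, the same morphism $T\pi\circ(\bar\omega^{\flat})^{-1}\circ\pi^{\ast}$, consume hypotheses i) and ii) at exactly the same two points (descent of the subbundle, and axiom ii) via the identification of $\mathrm{Im}\,(\bar\Lambda^{\sharp}_N)$ with $T\pi\left((V\pi)^{\perp}\right)$), and obtain integrability by projecting the identity on $M$ along $\pi$-related vector fields. The one small difference is in your favor: you verify explicitly that $\pi^{\ast}\bar\eta=\bar\omega^{\flat}([X,Y])$ lies in $\Gamma\left(S\cap {(V\pi)^{\circ}}^k\right)$, so that $\bar\eta$ is a legitimate argument of $\bar\Lambda^{\sharp}_N$, a point the paper's proof leaves implicit.
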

\begin{pf}
First of all, denote by $(T^*_p\pi)_k^1: (T^1_k)_{\pi (p)}^*(M/G)\rightarrow (T^1_k)_p^*M $ to be $k$ copies of $T^*_p\pi$.

From the fact that the $2$-forms $\omega_1,\ldots,\omega_k$ are $G$-invariant,
$S$ is invariant under the lifted action
\[
\begin{array}{rccl}
\Phi^{(T^1_k)^*}:& G\times (T^1_k)^*M &\rightarrow& (T^1_k)^*M\\
& (g,(\alpha_1,\ldots,\alpha_k))&\rightarrow &(\Phi^*_{g^{-1}}\alpha_1,\ldots,\Phi^*_{g^{-1}} \alpha_k).
\end{array}
\]
In addition, $\pi:S\rightarrow M$ is $G$-equivariant.

Now, for every $p\in M$, define the subspace $\hat{S}(\pi (p))$ of $(T^1_k)_{\pi (p)}^*(M/G)$
given by
\
\begin{equation}\label{equ:stilde}
\hat{S}(\pi (p)):=\{\hat{\alpha}=(\alpha_1,\ldots,\alpha_k)\in (T^1_k)_{\pi(p)}^*(M/G)\; |\; 
(T_p^*\pi)^1_k(\hat{\alpha})\in S(p) \}.
\end{equation}

Note that the $G$-invariant character of $S$ implies that 
\[
(T^*_{gp}\pi )^1_k(\hat{\alpha})\in S(gp)\iff (T^*_{p}\pi )^1_k(\hat{\alpha})\in S(p),\quad \forall g\in G.
\]
Thus, the definition of $\hat{S}(\pi (p))$ does not depend on the chosen point $p\in M$.

In fact, since $(T^*_p\pi)^1_k \left( (T^1_k)_{\pi(p)}^*(M/G)\right)={(V_p\pi)^{\circ}}^k$, it follows that $(T^*_p\pi)^1_k (\hat{S}(\pi (p)))=S(p)\cap {(V_p\pi)^{\circ}}^k$, for all $p\in M$.
In particular, $\hat{S}(\pi (p))\cong S(p)\cap {(V_p\pi)^{\circ}}^k$.

Now, using hypothesis i) in the theorem, we have that $S\cap {(V\pi)^{\circ}}^k$ is a $G$-invariant
vector subbundle of $(T^1_k)^*M$. Therefore, $\hat{S}$ is isomorphic to the quotient vector 
subbundle 
\[
\left(S\cap {(V\pi)^{\circ}}^k\right)/G=\left(\textrm{Im }(\bar{\omega}^{\flat})
\cap {(V\pi)^{\circ}}^k\right)/G .
\]

%Let's start thinking about the following diagram
%\[
%\xymatrix{
%T_pM\ar[d]_{\pi_{*}(p)}  & \ar[l]_{(\bar{\omega}^{\flat})^{-1}} S\subset (T^1_k)^*_pM \\
%T_{\pi(p)}(M/G) &  (T^1_k)_{\pi(p)}^*(M/G) \ar[u]_{(\pi^1_k)^*(p)},
%}
%\]

Next, define the vector bundle morphism $\hat{\Lambda}^{\sharp}:\hat{S}\rightarrow T(M/G)$ as
$\hat{\Lambda}^{\sharp}(\pi (p)):=T_p\pi \circ (\bar{\omega}^{\flat}(p))^{-1} \circ(T^*_p\pi)^1_k{}_{|\hat{S}(\pi (p))}$, i.e.,
$\hat{\Lambda}^\sharp(\pi (p))$ makes the following diagram commutative
\[
\xymatrix{
T_pM\ar[d]_{T_p\pi}& \ar[l]_{(\bar{\omega}^{\flat}(p))^{-1}}  S(p) \\
T_{\pi(p)}(M/G)& \hat{S}(\pi(p)) \ar[l]^{\hat{\Lambda}^\sharp(\pi (p))} \ar[u]_{(T^*_p\pi)^1_k}.
}
\]
Note that the $G$-invariant character of $\hat{S}$ and of the 2-forms $\omega_i$ implies that the map $\hat{\Lambda}^\sharp(\pi (p))$ is well defined (it is independent of the base point $p\in M$).

Let us prove that $(\hat{S},\hat{\Lambda}^{\sharp})$ is a poly-Poisson structure on $M/G$.

First, condition i) of Definition \ref{def:poly-poisson} is a direct consequence of the skew-symmetric character of the monomorphism $\bar{\omega}^{\flat}$.

In order to prove condition ii) in Definition \ref{def:poly-poisson}, take $\hat{\alpha}\in \hat{S}(\pi (p))$ such that
$\hat{\alpha}_{|\textrm{Im }(\hat{\Lambda}^{\sharp}(\pi(p)))}=0$. From the definition of $\hat{\Lambda}^\sharp$
we have
\begin{equation}\label{eq:reduced:dist}
\begin{array}{rcl}
\textrm{Im }(\hat{\Lambda}^{\sharp}(\pi (p)))&=&\left(T_p\pi\circ (\bar{\omega}^{\flat}(p))^{-1} \circ(T^*_p\pi)^1_k \right)(\hat{S}(\pi (p)))\\
&=&\left(T_p\pi \circ (\bar{\omega}^{\flat}(p))^{-1}\right) (S(p)\cap ({V_p\pi})^{\circ k})= (T_p\pi )({V_p\pi})^{\perp}.
\end{array}
\end{equation}
Thus, $\hat{\alpha}_{|\textrm{Im }(\hat{\Lambda}^{\sharp}(\pi(p)))}=0$ is equivalent to
$[(T^*_p\pi)^1_k(\hat{\alpha})]_{|{(V_p\pi)}^{\perp}}=0$, and, as a consequence,
$\hat{\alpha}\in \hat{S}$ with $\hat{\alpha}_{|\textrm{Im }(\hat{\Lambda}^{\sharp}(\pi(p)))}=0$ if and only
if
\[
(T^*_p\pi)^1_k(\hat{\alpha}) \in
\left(({(V_p\pi)^{\perp}})^{\circ k}\cap (V_p \pi)^{\circ k}\cap S(p) \right) .
\]
Therefore, Definition \ref{def:poly-poisson} ii) is the relation
\[
(T_p \pi \circ (\bar{\omega}^{\flat}(p) )^{-1})  \left(({(V_p\pi)^{\perp}})^{\circ k}\cap (V_p \pi)^{\circ k}\cap S(p) \right)  =0,
\]
and using that $V_p\pi=\textrm{Ker }(T_p\pi)$, this is equivalent to
\[
(\bar{\omega}^{\flat}(p))^{-1}\left(({(V_p\pi)^{\perp}})^{\circ k}\cap (V_p \pi)^{\circ k}\cap S(p) \right) \subseteq V_p\pi,
\]
which is just the second hypothesis in the theorem.

Finally, we will prove that the integrability condition \eqref{inte} holds for the couple $(\hat{S}, \hat{\Lambda}^\sharp)$.

Denote by $\Lambda^\sharp \colon S\subseteq (T^1_k)^* M\to TM$ the isomorphism $(\bar{\omega}^\flat)^ {-1}$ and by
\[
(\pi ^1_k)^*\colon \Gamma ((T^1_k)^*(M/G))\to \Gamma ((T^1_k)^*M)
\]
the monomorphism of $C^\infty (M/G)$-modules induced by the projection $\pi$ between 
the spaces $\Gamma ((T^1_k)^*(M/G))$ and $\Gamma ((T^1_k)^*M)$ of sections of the vector bundles
$(T^1_k)^*(M/G)\to M/G$ and $ (T^1_k)^*M\to M$, respectively. Then, if $\hat{\alpha},\hat{\beta}
\in \Gamma ((T^1_k)^*(M/G))$ it follows that the vector fields 
$\Lambda^\sharp ((\pi ^1_k)^*\hat{\alpha})$ and $\Lambda^\sharp ((\pi ^1_k)^*\hat{\beta})$ are
$\pi$-projectable on the vector fields $\hat{\Lambda}^\sharp (\hat{\alpha})$ and  $\hat{\Lambda}^\sharp (\hat{\beta})$, respectively. Moreover, since the couple $(S,\Lambda^\sharp )$ is a poly-Poisson structure on $M$, we deduce that
\[
[ \Lambda^\sharp ((\pi ^1_k)^*\hat{\alpha}),\Lambda^\sharp ((\pi ^1_k)^*\hat{\beta}) ]=
(\Lambda ^\sharp \circ (\pi ^1_k)^* ) \left({\mathcal{L}}_{\hat{\Lambda}^{\sharp}(\hat{\alpha})}
\hat{\beta}-{\mathcal{L}}_{\hat{\Lambda}^{\sharp}(\hat{\beta})}\hat{\alpha}-
\d(\hat{\beta}(\hat{\Lambda}^{\sharp}(\hat{\alpha})))\right).
\]
Thus, if we project on $M/G$ using $\pi$, we conclude that
\[
[ \hat{\Lambda}^{\sharp}(\hat{\alpha}),\hat{\Lambda}^{\sharp}(\hat{\beta}) ]=
\hat{\Lambda}^\sharp  \left({\mathcal{L}}_{\hat{\Lambda}^{\sharp}(\hat{\alpha})}
\hat{\beta}-{\mathcal{L}}_{\hat{\Lambda}^{\sharp}(\hat{\beta})}\hat{\alpha}-
\d(\hat{\beta}(\hat{\Lambda}^{\sharp}(\hat{\alpha})))\right).
\]
\end{pf}

\begin{example}[Reduction of the cotangent bundle of $k$-covelocities
associated with a Lie group]\label{ex:red:group}

Let $G$ be a Lie group. The cotangent bundle of $k$-covelocities of $G$, $(T^1_k)^*G$ has a canonical
$k$-polysymplectic structure (Example \ref{canexample}). In addition, the action of $G$ on itself by left translations
\[
\begin{array}{rccl}
L\colon & G\times G & \longrightarrow & G \\
&   (g,h) & \rightarrow &L(g,h)=g\cdot h
\end{array}
\]
can be lifted to an action $L^{(T^1_k)^*}$ on $(T^1_k)^*G$ by
\begin{equation}\label{6}
\begin{array}{rccl}
L^{(T^1_k)^*}\colon & G\times(T^1_k)^*G & \longrightarrow & (T^1_k)^*G \\
&    (g,(\alpha_1,\ldots,\alpha_k))&\rightarrow &(L^*_{g^{-1}}\alpha_1,\ldots, L^*_{g^{-1}} \alpha_k).
\end{array}
\end{equation}

If we consider the left trivialization of the tangent bundle to $G$, $TG\cong G\times {\mathfrak g}$, and the corresponding trivialization
$(T^1_k)^*G\cong G\times \mathfrak{g}^*_k$, $\mathfrak{g}^*_k=\mathfrak{g}^*\times\stackrel{(k}{\ldots}\times \mathfrak{g}^*$, the action \eqref{6} can be written as
\begin{equation}\label{action}
\begin{array}{rccl}
L^{(T^1_k)^*}\colon & G\times \mathfrak{g}^*_k & \longrightarrow & G\times \mathfrak{g}^*_k \\
&    (g,(h,\mu_1,\ldots,\mu_k))&\rightarrow &(g\cdot h,\mu_1,\ldots, \mu_k),
\end{array}
\end{equation}
and the associated principal bundle $\pi\colon (T^1_k)^*G\rightarrow ((T^1_k)^*G)/G$ can be identified with the trivial principal bundle 
$\pi\colon G\times \mathfrak{g}^*_k\rightarrow
\mathfrak{g}^*_k$. Thus, the vertical bundle takes the expression
\begin{equation}\label{eq:vertical}
V\pi_{(g,\mu_1,\ldots,\mu_k)}=\{(X,0,\ldots,0)\textrm{ such that }X\in
T_gG\}. 
\end{equation}
Therefore, its annihilator is
\begin{equation}\label{eq:vert:annihilator}
({V\pi})_{(g,\mu_1,\ldots,\mu_k)}^{\circ}=\{(0,\xi_1,\xi_2,\ldots,\xi_k)
\,\, | \,\, \xi_1,\ldots,\xi_k\in\mathfrak{g}\}.
\end{equation}

We will show that the action of $G$ on $(T^1_k)^*G\cong G\times \mathfrak{g}^*_k$, endowed with the canonical
polysymplectic structure, satisfies the hypotheses of Theorem \ref{reduction}.

First of all we are going to compute explicitly the polysymplectic structure on $G\times
\mathfrak{g}^*_k$. For this purpose, it is first described the canonical symplectic structure $\omega$ on
$T^*G\cong G\times \mathfrak{g^*}$ (see for instance, \cite{OR-2004}). Using the left translation
by $g\in G$ we can identify $T_gG$ and $T^*_gG$ with $\mathfrak{g}$ and $\mathfrak{g}^*$, 
respectively. Thus, we have that $T_{(g,\mu)}(G\times
\mathfrak{g}^*)\cong \mathfrak{g}\times  \mathfrak{g}^*$ and $T_{(g,\mu)}^*(G\times \mathfrak{g}^*)\cong \mathfrak{g}^*\times \mathfrak{g}$ and, under
these identifications, it follows that
\begin{equation}
\hspace{.1cm}
\begin{array}{ccl}
\kern20pt\omega_{(g,\mu)}\kern-2pt: T_{(g,\mu)}(G\times \mathfrak{g}^*)\kern-2pt \times \kern-2pt T_{(g,\mu)}(G\times \mathfrak{g}^*)\cong (\mathfrak{g}\kern-2pt\times  \kern-2pt\mathfrak{g}^*)\kern-2pt \times \kern-2pt(\mathfrak{g}\kern-2pt\times \kern-2pt \mathfrak{g}^*)&\kern-5pt\longrightarrow &\kern-5pt\mathbb{R}\\
\left((\xi_1,\tau_1),(\xi_2,\tau_2)\right)&\kern-5pt\rightarrow &\kern-5pt\omega_{(g,\mu)}((\xi_1,\tau_1),(\xi_2,\tau_2))\\
\noalign{\medskip} &&\kern-20pt=\tau_2(\xi_1)- \tau_1(\xi_2)+\mu([\xi_1,\xi_2]).
\end{array}
\end{equation}
Therefore,
\begin{equation}\label{1}
\begin{array}{ccl}
\omega_{(g,\mu)}^{\flat}:  T_{(g,\mu)}(G\times \mathfrak{g}^*)\cong \mathfrak{g}\times  \mathfrak{g}^*&\longrightarrow &T_{(g,\mu)}^*(G\times \mathfrak{g}^*)\cong \mathfrak{g}^*\times  \mathfrak{g} \\ \noalign{\medskip}
(\xi,\tau)&\rightarrow &\omega_{(g,\mu)}^{\flat}(\xi,\tau)=(ad^*_{\xi}\mu -\tau,\xi),
\end{array}
\end{equation}
and, as a consequence, $\left(\omega_{(g,\mu)}^{\flat}\right)^{-1}(\tau,\xi)=(\xi,ad^*_{\xi}\mu -\tau)$, for 
$(\tau,\xi)\in \mathfrak{g}^*\times \mathfrak{g}\cong T_{(g,\mu)}^*(G\times \mathfrak{g}^*)$.

Now, denote by $\bar{\omega}^{\flat}\colon T((T^1_k)^*G)\to (T^1_k)^*((T^1_k)^*G)$
the vector bundle morphism
associated with the polysymplectic structure on $(T^1_k)^*G\cong G\times \mathfrak{g}^*_k$.
%$T_{(g,\mu_1,\ldots,\mu_k)}(G\times (\mathfrak{g}^*)^k)=\mathfrak{g}
%\times (\mathfrak{g}^*)^k$ and $(T^1_k)^*_{(g,\mu_1,\ldots,\mu_k)}(G\times (\mathfrak{g}^*)^k)=(\mathfrak{g}^*)^k
%\times (\mathfrak{g}^*)^{k^2}$.
Then, from Eq. \eqref{1} and definition of the canonical $k$-polysymplectic
structure on $(T^1_k)^*G$ we have
%\begin{equation}\label{2}
%\begin{array}{rccl}
%\bar{\omega}_{(g,\mu_1,\ldots,\mu_k)}^{\flat}: &T_{(g,\mu_1,\ldots,\mu_k)}(G\times (\mathfrak{g}^*)^k)&\longrightarrow &(T^1_k)_{(g,\mu_1,\ldots,\mu_k)}^*(G\times (\mathfrak{g}^*)^k) \\
%& (\xi,\tau_1,\ldots, \tau_k)&\rightarrow &\bar{\omega}_{(g,\mu_1,\ldots,\mu_k)}^{\flat}(\xi,\tau_1,\ldots, \tau_k),%=(ad^*_{\xi}-\tau,0,\ldots,\overbrace{\xi}^{A},\ldots, 0)_{A=1,\ldots,k}
%\end{array}
%\end{equation}
\begin{equation}\label{2}
\begin{array}{rcl}
\bar{\omega}_{(g,\mu_1,\ldots,\mu_k)}^{\flat}(\xi,\tau_1,\ldots, \tau_k) &=&
\Big( (ad^*_{\xi}\mu_1-\tau_1,\xi,0,\ldots, 0),(ad^*_{\xi}\mu_2-\tau_2,0,\xi,0,\ldots,0),\ldots,
\\ & & \kern5pt , \ldots , (ad^*_{\xi}\mu_k-\tau_k,0,0,\ldots,{\xi})\Big ) ,
\end{array}
\end{equation}
%By this expresion is clear that the subbundle $S$ is written as
%\[
%\begin{array}{rcl}
%S_{(g,\mu_1,\ldots,\mu_k)}&\kern-10pt =&\kern-10pt \textrm{Im }(\bar{\omega}^{\flat}_{(g,\mu_1,\ldots,\mu_k)})\\
%&\kern-10pt =&\kern-10pt \{\left((\tau_1,\xi,0,\ldots, 0),(\tau_2,0,\xi,0,\ldots,0),\ldots,(\tau_k,0,0,\ldots,\xi)\right)\,|\,
%\tau_1,\ldots,\tau_k\in\mathfrak{g}^*, \ \xi\in\mathfrak{g}\}.
%\end{array}
%\]
for $(\xi,\tau_1,\ldots, \tau_k)\in\mathfrak{g}\times\mathfrak{g}^*_k$. In addition, using Eq. \eqref{2}, it is deduced that %$S=\textrm{Im }(\bar{\omega}^{\flat}$ and $V\pi$ it is deduced that
\begin{equation}\label{4}
(S\cap{(V\pi)^{\circ}}^k)_{(g,\mu_1,\ldots,\mu_k)}=\{\left((0,\xi,0,\ldots, 0),(0,0,\xi,0,\ldots,0),\ldots,(0,0,0,\ldots,\xi)\right)\,|\,
\xi\in\mathfrak{g}\}
\end{equation}
which implies that hypothesis i) in Theorem \ref{reduction} holds.

%Now we are going to chek hypotesis \ref{hypo2} of the theorem.

%Let us compute $(V\pi)^{\perp} :=(\bar{\omega}^{\flat})^{-1}({{V\pi}^{\circ}}^k\cap S)$.
%Using Eq. \eqref{2},
%\[
%\begin{array}{rccl}
%{(\bar{\omega}^{\flat})}^{-1}_{(g,\mu_1,\ldots,\mu_k)}: & S_{(g,\mu_1,\ldots,\mu_k)}&\longrightarrow &T_{(g,\mu_1,\ldots,\mu_k)}(G\times (\mathfrak{g}^*)^k) \\
%& \bar{\alpha}&\rightarrow &{(\bar{\omega}^{\flat})}^{-1}_{(g,\mu_1,\ldots,\mu_k)}(\bar{\alpha})
%\end{array}
%\]
%is given by
%\begin{equation}\label{3}
%\begin{array}{l}
%{(\bar{\omega}^{\flat})}^{-1}_{(g,\mu_1,\ldots,\mu_k)}\left((\tau_1,\xi,0,\ldots, 0),(\tau_2,0,\xi,0,\ldots,0),\ldots,
%(\tau_k,0,0,\ldots,\xi)\right)\\ \noalign{\medskip}
%=\left((\xi,ad^*_{\xi}\mu_1-\tau_1,0,\ldots, 0),(\xi,0,ad^*_{\xi}\mu_2-\tau_2,0,\ldots,0),\ldots,(\xi,0,0,\ldots,ad^*_{\xi}\mu_k-\tau_k)\right)
%\end{array}
%\end{equation}
%for any $(\tau_1,\xi,0,\ldots, 0),(\tau_2,0,\xi,0,\ldots,0),\ldots,(\tau_k,0,0,\ldots,\xi)\in S_{(g,\mu_1,\ldots,\mu_k)}$.
On the other hand, from Eq. \eqref{1} and \eqref{2}, we obtain
\[
(V\pi)^{\perp}_{(g,\mu_1,\ldots,\mu_k)}=\{(\xi,ad^*_{\xi}\mu_1,ad^*_{\xi}\mu_2,\ldots,ad^*_{\xi}\mu_k) \, |\, \xi \in \mathfrak{g} \}
\]
and, as a consequence,
\[
((V\pi)^{\perp}_{(g,\mu_1,\ldots,\mu_k)})^{\circ}=\{(ad^*_{\xi_1}\mu_1+\ldots +
ad^*_{\xi_k}\mu_k,\xi_1,\xi _2,\ldots,\xi _k) \, | \, \xi_i\in\mathfrak{g},
\ i \in\{1,\ldots,k\} \}.
\]
Therefore, we deduce that $({(V\pi)^{\perp}}^{\circ})^k\cap{(V\pi)^{\circ}}^k\cap \textrm{Im
}(\bar{\omega}^{\flat})) _{(g,\mu_1,\ldots,\mu_k)}$ is just
\begin{equation}\label{5}
\{\left((0,\eta,0,\ldots,0),(0,0,\eta,\ldots,0),\ldots,(0,0,0,\ldots,\eta)\right)\, | \, \eta\in\mathfrak{g}, \,
ad^*_{\eta }\mu_i=0, \ i\in\{1, \ldots,k\}\}.
\end{equation}
Thus, given an element of %and using \eqref{3} again, given $((0,\eta,0,\ldots,0),(0,0,\eta,\ldots,0),\ldots,(0,0,0,\ldots,\eta)) \in
$({(V\pi)^{\perp}}^{\circ})^k\cap{(V\pi)^{\circ}}^k\cap \textrm{Im }(\bar{\omega}^{\flat}))_{(g,\mu_1,\ldots,\mu_k)}$,
\[
\begin{array}{rcl}
{(\bar{\omega}^{\flat})}^{-1}_{(g,\mu_1,\ldots,\mu_k)}\left((0,\eta,0,\ldots,0),(0,0,\eta,\ldots,0),\ldots,(0,0,0,\ldots,\eta)\right)
&\kern-7pt =&\kern-7pt (\eta,ad^*_{\eta}\mu_1,ad^*_{\eta}\mu_2,\ldots,ad^*_{\eta}\mu_k) \\
&\kern-7pt =&\kern-7pt (\eta,0,0,\ldots,0)
\end{array}
\]
which implies that this vector is vertical. So, the hypothesis ii) of Theorem \ref{reduction} is satisfied.

Now, let us give an explicit description of $\hat{S}$ and $\bar{\Lambda}^\sharp$. %We will see that %in this case, the leaves of the canonical $k$-polysymplectic foliation are the orbits of the $k$-coadjoint %action.

\textbf{Computation of $\hat{S}$:} Recall by Theorem \ref{reduction}, that $\hat{S}$ is the 
subbundle of $\mathfrak{g}^*_k$ given by $\hat{S}\cong\left(S\cap 
{(V\pi)^{\circ}}^k\right)/G=\left(\textrm{Im
}(\bar{\omega}^{\flat})\cap {(V\pi)^{\circ}}^k\right)/G $. By \eqref{4} and the definition of the 
action \eqref{action} is obvious that the fiber of $\hat{S}$ at a point $\mu=(\mu_1,\ldots,
\mu_k)\in\mathfrak{g}^*_k$ is given by
\[
\hat{S}_{\mu}=\{((\xi,0,\ldots,0),(0,\xi,\dots,0),\ldots,(0,0,\ldots,\xi)) \; | \; \xi\in\mathfrak{g}\}
\]

\textbf{Computation of ${\hat{\Lambda}}^{\sharp}$: } Remember that ${\hat{\Lambda}}^{\sharp}$ is given by
$T_{(g,\mu)}\pi \circ (\bar{\omega}^{\flat}_{(g,\mu)})^{-1} \circ (T^*_{(g,\mu)}\pi )^1_k$ 
%where the pull-back is defined to any point of the fiber (the point does not matter by equivariance). 
So on $\mu=(\mu_1,\ldots,\mu_k)\in
\mathfrak{g}^*_k$, given $((\xi,0,\ldots,0),(0,\xi,\dots,0),\ldots,(0,0,\ldots,\xi))\in \hat{S}_{\mu}$
\[
\begin{array}{l}
{\hat{\Lambda}}^{\sharp}_{\mu}((\xi,0,\ldots,0),(0,\xi,\dots,0),\ldots,(0,0,\ldots,\xi))\\ \noalign{\medskip}
=\left(T_{(g,\mu)}\pi \circ (\bar{\omega}^{\flat}_{(g,\mu)})^{-1} \circ (T^*_{(g,\mu)}\pi )^1_k \right)((\xi,0,\ldots,0),(0,\xi,\dots,0),\ldots,(0,0,\ldots,\xi))\\ \noalign{\medskip}
=\left(T_{(g,\mu)}\pi \circ (\bar{\omega}^{\flat}_{(g,\mu)})^{-1} \right)\left((0,\xi,0,\ldots,0),(0,0,\xi,\ldots,0),\ldots,(0,0,0,\ldots,\xi)\right))\\
\noalign{\medskip} =T_{(g,\mu)}\pi  (\xi,ad^*_{\xi}\mu_1,ad^*_{\xi}\mu_2,\ldots,ad^*_{\xi}\mu_k)\\ \noalign{\medskip}
=(ad^*_{\xi}\mu_1,ad^*_{\xi}\mu_2,\ldots,ad^*_{\xi}\mu_k)
\end{array}
\]
where we used \eqref{2}.

Note that this poly-Poisson structure coincides with  the one obtained using the construction of
Example \ref{ex:whitney-algebroid} (see \eqref{eq:algebra:S} and \eqref{eq:algebra:lambda} in 
Remark \ref{rmk:whitney}). Moreover, as a consequence, we have that
the polysymplectic leaves are just the orbits of the $k$-coadjoint action.

\end{example}

\begin{example}[Reduction of the cotangent bundle of $k$-covelocitites
associated with the total space of a principal bundle]\label{sub:reduc:whitney}

Let $Q$ be a manifold  of dimension $n$ endowed with
a free and proper action of a Lie group $G$, $\Psi:G\times
Q\rightarrow Q$. Let $\tilde\pi \colon Q\to Q/G$ be the corresponding
principal bundle projection. As we did in the previous example, we can define the lifted action
\[
\begin{array}{rccl}
\Psi^{(T^1_k)^*}: & G\times(T^1_k)^*Q & \longrightarrow & (T^1_k)^*Q \\
&    (g,(\alpha_1,\ldots,\alpha_k))&\rightarrow &(\Psi^*_{g^{-1}}\alpha_1,\ldots,\Psi^*_{g^{-1}} \alpha_k)
\end{array}
\]
and this action still is free and proper. Thus, we have the principal bundle $\pi:(T^1_k)^*Q\rightarrow ((T^1_k)^*Q)/G $ and, moreover, $((T^1_k)^*Q)/G\cong T^*Q/G\oplus\stackrel{(k}{\ldots}\oplus T^*Q/G$.

Let us see that the action of $G$ on the polysymplectic manifold $(T^1_k)^*Q$ satisfies the 
conditions of Theorem \ref{reduction}. It is clear that the lifted action preserves the 
polysymplectic structure. In fact, since the lifted action
of $G$ on $T^*Q$ preserves the canonical symplectic structure $\omega _Q$, the action 
$\Psi^{(T^1_k)^*}$ preserves the polysymplectic structure on $(T^1_k)^*Q$.

Next, let $U$ be an open subset of $Q/G$ such that $\tilde\pi^{-1}(U)$
is a trivializing open subset of $Q$, that is
$\tilde\pi^{-1}(U)\cong U\times G$ and the principal action of $G$ on
$\tilde\pi^{-1}(U)\cong U\times G$ is given by $\Psi(g,(u,h))=(u,gh)$,
for $g\in G$ and $(u,h)\in U\times G$. As a consequence, $(T^1_k)^* (\tilde\pi^{-1}(U))\cong (T^1_k)^*U \times (T^1_k)^*G $.
Moreover, if $\bar{\omega}_{U}$ and $\bar{\omega}_{G}$ are the canonical
polysymplectic forms on $(T^1_k)^*U$ and $(T^1_k)^*G$ respectively then the polysymplectic form on
$(T^1_k)^*(\tilde\pi^{-1}(U))\cong  (T^1_k)^*(U)\times
(T^1_k)^*G$, is given by $\bar{\omega}_{U}\times\bar{\omega}_{G}$.

On the other hand, under the identification $(T^1_k)^*(\tilde\pi^{-1}(U))\cong  (T^1_k)^*(U)\times
(T^1_k)^*G$ the lifted action of $G$ on $(T^1_k)^*
(\tilde\pi^{-1}(U))$ is given by
\begin{equation}\label{ecu:action}
\Psi^{(T^1_k)^*}(g,(\widetilde{\alpha},\widetilde{\gamma}))=
(\widetilde{\alpha},L^{(T^1_k)^*}(g,\widetilde{\gamma})), 
\end{equation}
for $g\in G$ and $(\widetilde{\alpha},\widetilde{\gamma})\in (T^1_k)^*(U)\times
(T^1_k)^*G$. Therefore, the quotient space $(T^1_k)^* (\tilde\pi^{-1}(U))/G$ is just  $(T^1_k)^*(U)\times \mathfrak{g}^*_k$, where it has been used the identification
$(T^1_k)^*G\cong G\times\mathfrak{g}^*_k$ as in Example \ref{ex:red:group}.

Now, we will prove that the polysymplectic structure on $(T^1_k)^*(\tilde\pi^{-1}(U))
\kern-1pt\cong\kern-1pt  (T^1_k)^*(U)\times
(T^1_k)^*G\cong (T^1_k)^*U\times (G\times \mathfrak{g}^*_k)$ and the
action of $G$ on $(T^1_k)^*(\tilde{\pi}^{-1}(U))$ given by
\eqref{ecu:action}, satisfy the hypotheses of Theorem \ref{reduction}.

In order to check these hypotheses, we split the problem in two parts,
the side of $(T^1_k)^* U$ and the one of $(T^1_k)^*G$. Using that the
intersection of the vertical bundle to $\pi$ with $T((T^1_k)^*U)$ is
trivial and the results in the previous Example
\ref{ex:red:group}, we deduce that the hypotheses of Theorem
\ref{reduction} hold.

On the other hand, from \eqref{equ:stilde}, it follows that:
\[
\hat{S}(\pi(\overline{\alpha}))=\{(\hat{\alpha}_1,{\ldots},\hat{\alpha}_k)\in(T^1_k)^*_{\pi(\overline{\alpha})}(T^*Q/G\oplus\stackrel{(k}{\ldots}\oplus
T^*Q/G ) \ | \
(T^*_{\overline{\alpha}}\pi)^1_k(\hat{\alpha}_1,{\ldots}
,\hat{\alpha}_k)\in S(\overline{\alpha})\},
\]
for $\overline{\alpha}\in (T^1_k)^*_qQ$, where
$S=\textrm{Im}(\overline{\omega}^{\flat})$ and $\overline{\omega}$ is
the canonical polysymplectic structure on $(T^1_k)^*Q$.

This implies that $\hat{S}$ is given by \eqref{eq:dist:whitney}. Thus,
$(\hat{\alpha}_1,\ldots,\hat{\alpha}_k)\in
\hat{S}(\pi(\overline{\alpha}))$ if
\[
\hat{\alpha}_A=(T^*_{\overline{\alpha}}\tilde{\pi}_A)(\tilde{\alpha}_A),
\textrm{ with }\tilde{\alpha}_A\in T^*_{\tilde{\pi}_A(\overline{\alpha})}(T^*Q/G)
\]
and
\[
\tilde{\alpha}_A\circ ^{\textrm{v}}_{\tilde{\pi}
  _A(\overline{\alpha})}= \tilde{\alpha}_B\circ
^{\textrm{v}}_{\tilde{\pi}_B(\overline{\alpha})}\textrm{, for all $A$
  and $B$},
\]
$\tilde{\pi}_A:T^*Q/G\oplus\stackrel{(k}{\ldots}\oplus
T^*Q/G\to T^*Q/G$ being the canonical projection over the $A$th-factor.

In order to compute ${\hat{\Lambda}}^{\sharp}$, we have to take into
account that, from Theorem
\ref{reduction}, ${\hat{\Lambda}}^{\sharp}_{\tilde\pi(\widetilde{\alpha}_1,\ldots
  ,\widetilde{\alpha}_k)}=T_{(\alpha_1,\ldots,\alpha_k)}\pi \circ
(\overline{\omega}^{\flat}_ {(\alpha_1,\ldots,\alpha_k)})^{-1}\circ
(T^*_{(\alpha_1,\ldots,\alpha_k)}\pi)^1_k$. Moreover, if
$\tilde{\alpha}_A\in T^*_{\tilde{\pi}_A(\overline{\alpha})}(T^*Q/G)$,
for all $A$, and $\pi_A:T^*Q\oplus\stackrel{(k}{\ldots}\oplus
T^*Q\to T^*Q$ is the canonical projection then
\[
\hat{\Lambda}^{\sharp}_{\pi(\overline{\alpha})}((T^*_{\overline{\alpha}}\pi_1)(\tilde{\alpha}_1),\ldots,
(T^*_{\overline{\alpha}}\pi_k)(\tilde{\alpha}_k))=(T_{\overline{\alpha}}\pi\circ
(\overline{\omega}_{\overline{\alpha}}^{\flat})^{-1}\circ
T^*_{\overline{\alpha}}\pi)((T^*_ {\overline{\alpha}}\pi_1)(\tilde{\alpha}_1),\ldots,(T^*_ {\overline{\alpha}}\pi_k)(\tilde{\alpha}_k)).
\]

The poly-Poisson structure on $T^*Q/G\oplus\ldots\oplus T^*Q/G$ is just the one associated to the linear
Poisson structure on $T^*Q/G$ associated to the Atiyah algebroid $TQ/G$ (see Example
\ref{ex:whitney-algebroid}).
\end{example}
\begin{remark}
Let $Q$ be an arbitrary manifold of dimension $n$. Then, the frame bundle $\pi\colon LQ\to Q$ is endowed with a polysymplectic structure (see Example \ref{ex:frame:bundle}) and, moreover, it is an open subset of $(T^1_n)^*Q$, and the restriction of the polysymplectic structure on $(T^1_n)^*Q$ is just $-\d\vartheta$ (see Remark \ref{rmk:relation}).

If $G$ is a Lie group acting freely and properly on $Q$ then $G$ acts on $(T^1_n)^*Q$. In addition, $LQ$ is
stable under this lifted action (note that, since $\Psi _g\colon Q\to Q$ is diffeomorphism for any $g\in G$,
$T\Psi _g\colon T_qQ\to T_{\Psi _g(q)}Q$ is an isomorphism).
%Indeed, if $\vartheta$ is the soldering one-form on $LQ$ and  $\Phi_L\colon G\times LQ\to LQ$ denotes %the lifted action of $G$ on $LQ$ a simple computation shows that, given $u\in LQ$ and $g\in G$ and $X\in %T_u(LQ)$ one has that
%\[
%((\Phi_L)_g^*\vartheta )(u)(X)=\vartheta ((\Phi _L){g^{-1}}(u)) (T(\Phi _L)_g (X))=((\Phi _L)_{g^{-1}}(u))^{-1} %((T\Pi\circ T(\Phi _L)_g) (X))=((\Phi _L)_{g^{-1}}(u))^{-1} ((T(\Phi _L)_g \circ T\Pi ) (X))=u^{-1}(T\Pi (X))
%\]
Thus, from Theorem \ref{reduction}, the quotient space $LQ/G$ is a poly-Poisson structure. But a simple
computation shows that $LQ/G$ is just the set
\[
\{ u\colon \R^n\to (TQ/G)_{[q]} \textrm{ linear isomorphism} \, | \, [q] \in Q/G \},
\]
that is, $LQ/G$ is  $L(TQ/G)$, the frame bundle associated to the Atiyah algebroid $TQ/G$. The computations in Example \ref{sub:reduc:whitney} allow to conclude that the poly-Poisson structure on $LQ/G$ obtained from the reduction procedure is the  poly-Poisson structure on $L(TQ/G)$ from Example \ref{ex:poly:frame}.
\end{remark}

%%%%%%%%%%%%%%%%%%%%%%%%%%%%%%%%%%%%%%%%%%%%%%%%%%%%%%%%%%%%%%%%%%%%%%%%%%%%%%%%%%%%%%
%%%%%%%%%%%  Appendix        %%%%%%%%%%%%%%%%%%%%%%%%%%%%%%%%%%%%%%%%%%%
%%%%%%%%%%%%%%%%%%%%%%%%%%%%%%%%%%%%%%%%%%%%%%%%%%%%%%%%%%%%%%%%%%%%%%%%%%%%%%%%%%%%%%

\section*{Appendix: Lie algebroids and fiberwise linear Poisson structures} \label{algebroides}

\newcounter{apendice}
\setcounter{apendice}{0}
\setcounter{equation}{0}
\renewcommand{\thesection}{A}

We will review some basic facts on Lie algebroids and fiberwise linear Poisson structures (for more,
details, see \cite{Ma}).

A \emph{Lie algebroid} is a real vector bundle $\tau _E\colon E\to Q$ of rank $n$
over a manifold $Q$ of dimension $m$ such that the space of sections $\Gamma (E)$
admits a Lie algebra structure $\lcf\cdot,\cdot\rcf_E$ and, moreover, there exists a vector bundle map
$\rho _E\colon E\to TQ$, \emph{the anchor map}, such that if we also
denote by $\rho _E\colon \Gamma(E)\to {\X }(Q)$ the corresponding morphism of
$C^\infty(Q)$-modules then
\[
\lcf X,fY\rcf _E=f\lcf X,Y\rcf _E+ \rho_E(X)(f)Y,
\]
for $X,Y\in \Gamma(E)$ and $f\in C^\infty(Q)$.

The previous conditions imply that $\rho _E$ is a Lie algebra morphism, that is,
\[
\rho _E \lcf X, Y\rcf _E=[\rho _E(X),\rho _E(Y)], \qquad \textrm{ for }X, Y\in \Gamma (E).
\]
Some examples of Lie algebroids are the tangent bundle to a manifold $Q$ and a real
Lie algebra of finite dimension.

Let $(\lcf\cdot,\cdot\rcf _E,\rho _E)$ be a Lie algebroid structure on a real vector bundle
$\tau _E\colon E\to Q$. If $(q^i)$ are local coordinates on an open subset $U\subseteq Q$ and
$\{e_\alpha \}$  is a local basis of $\Gamma (E)$ then
\[
\rho _E(e_\alpha )=\rho_\alpha ^i\frac{\partial }{\partial q^i},\qquad \lcf e_\alpha ,e_\beta \rcf _E
={\mathcal C }_{\alpha \beta}^\gamma e_\gamma,
\]
with $\rho_\alpha ^i, {\mathcal C}_{\alpha \beta}^{\gamma}\in C^\infty (U)$
\emph{the local structure functions} of $E$ with respect to the local coordinates $(q^i)$
and the local basis $\{ e_\alpha \}$.

The local structure functions satisfy the \emph{the local structure equations} on $E$
\begin{equation}\label{estruc1}
\rho_\alpha ^i\frac{\partial \rho_\beta ^j}{\partial q^i}
-\rho_\beta ^i\frac{\partial \rho_\alpha ^j}{\partial q^i}= \rho_\gamma ^j{\mathcal
C}_{\alpha \beta}^\gamma , \qquad
\sum_{cyclic(\alpha, \beta,\gamma)}\Big ( \rho_{\alpha}^i\frac{\partial {\mathcal
C}_{\beta\gamma}^\nu }{\partial q^i} + {\mathcal C}_{\alpha \mu}^\nu {\mathcal
C}_{\beta\gamma}^\mu \Big )=0.
\end{equation}

Let $\tau_{E^*}\colon E^*\to Q$ be the dual bundle to a Lie algebroid $\tau _E\colon E\to Q$. Then,
$E^*$ admits a \emph{fiberwise linear Poisson bracket} $\{\cdot ,\cdot \}_{E^*}$ which is characterized
by the following relations,
\[
\begin{array}{c}
\{\widehat{X},\widehat{Y}\}_{E^*}=-\widehat{\lcf X,Y\rcf_E},\;\; \{ f\smalcirc \tau_{E^*},\widehat{Y}\}_{E^*}=
\rho _E(Y)(f)\smalcirc \tau _{E^*},\\
\{ f\smalcirc \tau_{E^*},g\smalcirc \tau_{E^*}\}_{E^*}=0,
\end{array}
\]
for $X,Y\in \Gamma(E)$ and $f,g\in C^\infty (Q)$. Here, if $Z\in\Gamma(E)$ we will denote by $\widehat{Z}\colon E^*\to \R$ the corresponding fiberwise linear function on $E^*$.

Let $(q^i)$ be local coordinates on an open subset $U\subseteq Q$ and $\{e_\alpha\}$ be a local basis
of $\Gamma (\tau_E^{-1}(U))$. Denote by $(q^i,p_\alpha )$ the corresponding local coordinates on
$E^*$. Thus,
\[
\{p_\alpha,p_\beta \}_{E^*}=-{\mathcal C}_{\alpha\beta}^\gamma p_\gamma,\; \{ q^i,p_\alpha \}_{E^*}=
\rho ^i_\alpha ,\; \{ q^i,q^j\}_{E^*}=0.
\]
Therefore, if $\Lambda _{E^*}$ is the corresponding Poisson 2-vector on $E^*$, it follows that
\begin{equation}\label{eq:linear:Poisson}
\Lambda _{E^*}^\sharp (\d q^i)=\rho ^i_\alpha \frac{\partial }{\partial p_\alpha},\;
\Lambda _{E^*}^\sharp (\d p_\alpha)=- ( \rho ^i_\alpha \frac{\partial }{\partial q^i} +{\mathcal C}_{\alpha \beta}^\gamma p_\gamma \frac{\partial}{\partial p_\beta})   .
\end{equation}

\begin{remark}
\begin{itemize}
\item[i)] The fiberwise linear Poisson structure on $T^*Q$ induced by the standard Lie algebroid structure
on $TQ$ is just the canonical Poisson structure on $T^*Q$ induced by the canonical symplectic structure
on $T^*Q$.
\item[ii)] If $\mathfrak{g}$ is a real Lie algebra of finite dimension then the linear Poisson structure
on $\mathfrak{g}^*$ is just the Lie-Poisson structure.
\end{itemize}
\end{remark}

\begin{remark}
Suppose that $\alpha _A\in E^*_q$ and $\widetilde{\alpha}_A\in T^*_{\alpha _A}E^*$, with $A\in \{1,2\}$
and $q\in Q$. Using \eqref{eq:linear:Poisson}, we deduce that
\begin{equation}\label{eq:relation:lin:Poisson}
(T_{\alpha _1}\tau _{E^*})(\Lambda ^\sharp _{E^*}(\widetilde{\alpha}_1))=(T_{\alpha _2}\tau _{E^*})(\Lambda ^\sharp _{E^*}(\widetilde{\alpha}_2))\iff \widetilde{\alpha}_1\smalcirc \, ^{\textrm{v}}_{\alpha_1}\smalcirc
\rho^*_{E}{}_{|T^*_qQ}=\widetilde{\alpha}_2\smalcirc \, ^{\textrm{v}}_{\alpha_2}\smalcirc
\rho^*_{E}{}_{|T^*_qQ},
\end{equation}
where $\rho_E^*\colon T^*Q\to E^*$ is the dual morphism of the anchor map $\rho _E\colon E\to TQ$
and $^{\textrm{v}}_{\alpha _A}\colon E^*_q\to T_{\alpha _A}E^*_q$ is the canonical isomorphism
between $E^*_q$ and $T_{\alpha _A}E^*_q$. We remark that
\[
^{\textrm{v}}_{\alpha _A} (e_\beta (q))=\frac{\partial}{\partial p_\beta}_{|\alpha _A}, \qquad \textrm{ for }
\beta \in \{1,\ldots ,n\}.
\]

\end{remark}

\textbf{Acknowledgments}
This work has been partially supported by MICINN (Spain)
Grants  MTM2009-13383,  MTM2010-21186-C02-01 and  MTM2009-08166-E, project "Ingenio
Mathematica" (i-MATH) No. CSD 2006-00032 (Consolider-Ingenio 2010), the project of the
Canary Islands government SOLSUB200801000238, the ICMAT Severo Ochoa
project SEV-2011-0087 and the European project IRSES-project ``Geomech-246981''. D.~Iglesias wishes to thank MICINN for a ``Ram\'on y Cajal" research
contract. M. Vaquero wishes to thank MICINN for a FPI-PhD Position.

\end{document}